\renewcommand{\baselinestretch}{\baselinestretch}
\renewcommand{\baselinestretch}{1.1}
\numberwithin{equation}{section}
\newtheorem{thm}{Theorem}[section]
\newtheorem{lem}[thm]{Lemma}
\newtheorem{cor}[thm]{Corollary}
\newtheorem{defn}[thm]{Definition}
\newtheorem{rmk}[thm]{Remark}
\newcommand{\ra}{{\ \rightarrow\ }}
\newcommand{\nra}{{\ \nrightarrow\ }}
\newcommand{\gen}{\text{gen}}
\newcommand{\z}{{\mathbb Z}}
\newcommand{\q}{{\mathbb Q}}
\newcommand{\rank}{\text{rank}}
\newcommand{\bx}{\bm x}
\newcommand{\by}{\bm y}
\newcommand{\bz}{\bm z}
\newcommand{\be}{\bm e}
\newcommand{\bv}{\bm v}
\newcommand{\bu}{\bm u}
\newcommand{\bd}{\bm d}
\newcommand{\lglue}{\bm[\!\![}
\newcommand{\rglue}{]\!\!\bm]}
\newcommand{\Mod}[1]{\ (\mathrm{mod}\ #1)}
\begin{document}

\title{Isolations of cubic lattices from their proper sublattices}
\author{Byeong-Kweon Oh}
\address{Department of Mathematical Sciences and Research Institute of Mathematics, Seoul National University, Seoul 08826, Korea}
\email{bkoh@snu.ac.kr}
\thanks{This work was supported by the National Research Foundation of Korea (NRF-2019R1A2C1086347) and (NRF-2020R1A5A1016126).}

\subjclass[2010]{11E12, 11E25}
\keywords{Isolations, cubic lattices}

\begin{abstract}   A (positive definite and integral) quadratic form is called  {\it an isolation} of a quadratic form $f$ if it represents all subforms of $f$  except for $f$ itself. 
 The minimum rank of isolations of a quadratic form $f$ is denoted, if it exists, by $\text{Iso}(f)$. In this article, we show that $\text{Iso}(I_2)=5$ and  $\text{Iso}(I_3)=6$, where $I_n=x_1^2+\dots+x_n^2$ is the sum of $n$ squares for any positive integer $n$.  After proving that there always exists an isolation of $I_n$ for any positive integer $n$, we provide some explicit lower and upper bounds for $\text{Iso}(I_n)$.   In particular, we show that $\text{Iso}(I_n) \in \Omega(n^{\frac32-\epsilon})$ for any $\epsilon>0$.

\end{abstract}
\maketitle

%%%%%%%%%%%%%%%%%%%%%%%%%%%%%%%%%%%%%%%%%%%%%%%%%%%%%%%%%%%%%%%%%%%%%%%%%%%%%%%%%%%
%%%%%%%%%%%%%%%%%%%%%%%%%%%%%%%%%%%%%%%%%%%%%%%%%%%%%%%%%%%%%%%%%%%%%%%%%%%%%%%%%%%
%%%%%%%%%%%%%%%%%%%%%%%%%%%%%%%%%%%%%%%%%%%%%%%%%%%%%%%%%%%%%%%%%%%%%%%%%%%%%%%%%%%
%%%%%%%%%%%%%%%%%%%%%%%%%%%%%%%%%%%%%%%%%%%%%%%%%%%%%%%%%%%%%%%%%%%%%%%%%%%%%%%%%%%
%%%%%%%%%%%%%%%%%%%%%%%%%%%%%%%%%%%%%%%%%%%%%%%%%%%%%%%%%%%%%%%%%%%%%%%%%%%%%%%%%%%
%%%%%%%%%%%%%%%%%%%%%%%%%%%%%%%%%%%%%%%%%%%%%%%%%%%%%%%%%%%%%%%%%%%%%%%%%%%%%%%%%%%
%%%%%%%%%%%%%%%%%%%%%%%%%%%%%%%%%%%%%%%%%%%%%%%%%%%%%%%%%%%%%%%%%%%%%%%%%%%%%%%%%%%
%%%%%%%%%%%%%%%%%%%%%%%%%%%%%%%%%%%%%%%%%%%%%%%%%%%%%%%%%%%%%%%%%%%%%%%%%%%%%%%%%%%
%%%%%%%%%%%%%%%%%%%%%%%%%%%%%%%%%%%%%%%%%%%%%%%%%%%%%%%%%%%%%%%%%%%%%%%%%%%%%%%%%%%
%%%%%%%%%%%%%%%%%%%%%%%%%%%%%%%%%%%%%%%%%%%%%%%%%%%%%%%%%%%%%%%%%%%%%%%%%%%%%%%%%%%
\section{Introduction}
%%%%%%%%%%%%%%%%%%%%%%%%%%%%%%%%%%%%%%%%%%%%%%%%%%%%%%%%%%%%%%%%%%%%%%%%%%%%%%%%%%%
%%%%%%%%%%%%%%%%%%%%%%%%%%%%%%%%%%%%%%%%%%%%%%%%%%%%%%%%%%%%%%%%%%%%%%%%%%%%%%%%%%%
%%%%%%%%%%%%%%%%%%%%%%%%%%%%%%%%%%%%%%%%%%%%%%%%%%%%%%%%%%%%%%%%%%%%%%%%%%%%%%%%%%%
%%%%%%%%%%%%%%%%%%%%%%%%%%%%%%%%%%%%%%%%%%%%%%%%%%%%%%%%%%%%%%%%%%%%%%%%%%%%%%%%%%%
%%%%%%%%%%%%%%%%%%%%%%%%%%%%%%%%%%%%%%%%%%%%%%%%%%%%%%%%%%%%%%%%%%%%%%%%%%%%%%%%%%%
%%%%%%%%%%%%%%%%%%%%%%%%%%%%%%%%%%%%%%%%%%%%%%%%%%%%%%%%%%%%%%%%%%%%%%%%%%%%%%%%%%%
%%%%%%%%%%%%%%%%%%%%%%%%%%%%%%%%%%%%%%%%%%%%%%%%%%%%%%%%%%%%%%%%%%%%%%%%%%%%%%%%%%%
%%%%%%%%%%%%%%%%%%%%%%%%%%%%%%%%%%%%%%%%%%%%%%%%%%%%%%%%%%%%%%%%%%%%%%%%%%%%%%%%%%%
%%%%%%%%%%%%%%%%%%%%%%%%%%%%%%%%%%%%%%%%%%%%%%%%%%%%%%%%%%%%%%%%%%%%%%%%%%%%%%%%%%%
%%%%%%%%%%%%%%%%%%%%%%%%%%%%%%%%%%%%%%%%%%%%%%%%%%%%%%%%%%%%%%%%%%%%%%%%%%%%%%%%%%%

For a positive integer $n$, an integral quadratic form $f$ of rank $n$ is a homogeneous quadratic polynomial 
$$
f(x_1,x_2,\dots,x_n)=\sum_{i,j=1}^n f_{ij}x_ix_j \quad (f_{ij}=f_{ji} \in \z)
$$
with $n$ variables such that the discriminant $\det(f_{ij})$  is nonzero.  The symmetric matrix $(f_{ij})$ is called the Gram matrix corresponding to the quadratic form $f$. 
Throughout this article, we always assume that any quadratic form $f$ is {\it integral and positive definite}, that is, the corresponding Gram matrix is integral and positive definite. 
  We say  a quadratic form $g(y_1,y_2,\dots,y_m)=\sum_{i,j=1}^m g_{ij}y_iy_j$ of rank $m$ is represented by the form $f$ if there are  integers $t_{ij}$'s such that 
$$
f(t_{11}y_1+t_{12}y_2+\dots+t_{1m}y_m,\dots,t_{n1}y_1+\dots+t_{nm}y_m)=g(y_1,y_2,\dots,y_m).
$$
If $M_f=(f_{ij})$ and $M_g=(g_{ij})$ are the  Gram matrices corresponding to $f$ and $g$, respectively, then $g$ is represented by $f$ if and only if there is an integral matrix  $T=(t_{ij}) \in M_{n,m}(\z)$ such that 
$$
T^tM_fT=M_g.
$$ 
Hence the existence of a representation between two quadratic forms is equivalent to the existence of an integral solution of the system of  diophantine equations given by those quadratic forms. Any quadratic form that is represented by a quadratic form $f$ is called a subform of $f$. 

Clearly, every subform of a quadratic form $g$ is represented by a quadratic form $f$ if $g$ itself is represented by $f$. One may naturally ask whether or not the converse of the above statement is also true, that is, 
\vskip 0.3cm
{\it if every proper subform of $g$ is represented by $f$, then  $g$  is represented by $f$?}   
\vskip 0.3cm
\noindent Related with this question,   it was proved in \cite{jko} that the ternary diagonal quadratic form $f=2x^2+2y^2+5z^2$ represents all squares of integers except for $1$, that is,   $f$ represents all subforms of the unary quadratic form $x^2$ except for $x^2$ itself. On the contrary,  Elkies, Kane, and Kominers proved in \cite{ekk} that any quadratic form which represents all proper subforms of $x^2+y^2+2z^2$ represents $x^2+y^2+2z^2$ itself.  

To study the above question, the following definition seems to be quite natural. A (positive definite and integral) quadratic form is called  {\it an isolation} of a quadratic form $f$ if it represents all subforms  of $f$ except for $f$ itself.  The minimum rank of isolations of a quadratic form $f$ is denoted, if it exists, by $\text{Iso}(f)$.  As stated above, the diagonal ternary quadratic form $2x^2+2y^2+5z^2$ is an isolation of $x^2$, and one may easily check that $\text{Iso}(x^2)=3$.    
 In fact, the existence of an isolation of a quadratic form $f$ is closely related with the uniqueness of a minimal $\mathcal S_f$-universality criterion set, where $\mathcal S_f$ is the set of all  subforms of $f$.

Let $\mathcal S$ be a set of (positive definite and integral) quadratic forms with bounded rank. A quadratic form $f$ is called {\it $\mathcal S$-universal} if it represents all quadratic forms in the set $\mathcal S$. A subset $\mathcal S_0$ of $\mathcal S$ is called an {\it $\mathcal S$-universality criterion set} if any $\mathcal S_0$-universal quadratic form is, in fact, $\mathcal S$-universal. We say $\mathcal S_0$ is {\it minimal} if any proper subset of $\mathcal S_0$ is not an $\mathcal S$-universality criterion set.  The 15-theorem, proved by Conway and Schneeberger in 1993 (see \cite{b}), states that the set $\{1,2,3,5,6,7,10,14,15\}$ is a minimal $\z^+$-universality criterion set, where $\z^+$ denotes the set of all positive integers. Here, a positive integer $a$ corresponds to the unary quadratic form $ax^2$. 

As a generalization of the $15$-Theorem, Kim, Kim, and the author proved in \cite{kko} that there is always a {\it finite} $\mathcal S$-universality criterion set.   After proving that,  the authors asked whether or not  the minimal $\mathcal S$-universality criterion set is unique for any set $\mathcal S$ of quadratic forms with bounded rank. 
In \cite{ekk}, Elkies, Kane, and Kominers answered this question  in the negative by giving simple examples of sets $\mathcal S$ that have minimal $\mathcal S$-universality criteria sets with multiple cardinalities.  In fact, they proved that if $\mathcal S$ is the set of all quadratic subforms of $x^2+y^2+2z^2$, then both $\{x^2
+y^2+2z^2\}$ and $\{x^2+y^2, 2x^2+2y^2+2z^2\}$ are minimal $\mathcal S$-universality criteria sets.  

For a quadratic form $f$, let $\mathcal S_f$ be the set of all subforms of $f$. Clearly, $\{f\}$ is a minimal $\mathcal S_f$-universality criterion set. Suppose that $\{f_1,f_2,\dots,f_t\}$ is another minimal $\mathcal S_f$-universality criterion set. From the definition, $f$ is not isometric to $f_i$ for any $i=1,2,\dots,t$. If there is a quadratic form, say $F$, which represents all proper subforms  $f$, then $F$ represents $f_i$ for any $i=1,2,\dots,t$. From the definition of an $\mathcal S_f$-universality criterion set, $F$ represents all subforms of $f$. In particular, $F$ represents $f$ itself. Therefore there does not exist an isolation of $f$.  Conversely,  suppose that $\{f\}$ is the unique minimal $\mathcal S_f$-universality criterion set.  Let $\widetilde{\mathcal S_f}$ be the set of all proper subforms of $f$, and let $\{f_1,f_2,\dots,f_s\}$ be a minimal $\widetilde{\mathcal S_f}$-universality criterion set. Note that such a finite set exists always by the result of \cite{kko}.  Since $\{f_1,f_2,\dots,f_s\}$ is not an $\mathcal S_f$-universality criterion set, there is a quadratic form which represents $f_i$ for any $i=1,2,\dots,s$, and hence represents all proper subforms of $f$, whereas it does not represent $f$ itself. This implies that there is an isolation of $f$.  Therefore there is an isolation of a quadratic form $f$ if and only if the set $\{f\}$ is the unique minimal $\mathcal S_f$-universality criterion set.

In this article, we prove that  there is an isolation of the quadratic form $I_n=x_1^2+x_2^2+\dots+x_n^2$  whose Gram matrix is the $n\times n$ identity matrix for any positive integer $n$.  Furthermore, we prove that 
$$
\text{Iso}(I_2)=5 \quad \text{and} \quad   \text{Iso}(I_3)=6.
$$ 
In Sections 4 and 5, we  provide  explicit lower and upper  bounds for $\text{Iso}(I_n)$ for any  positive integer $n$. In particular, we show that $\text{Iso}(I_n) \in \Omega(n^{\frac32-\epsilon})$ for any $\epsilon>0$.    Recall that for two arithmetic functions $f(n)$ and $g(n)$, we say $f(n) \in \Omega(g(n))$ if and only if there is a constant $C>0$ such that $C\cdot g(n) \le f(n)$ for any sufficiently large integer $n$.

The subsequent  discussion will be conducted in the language of quadratic spaces and lattices.  The readers are referred to \cite{ki} and \cite{om} for any unexplained notations and terminologies.    For simplicity, the quadratic map and its associated bilinear form on any quadratic space will be denoted by $Q$ and $B$, respectively.  The term {\em lattice} always means a finitely generated $\z$-module on a finite dimensional positive definite quadratic space over $\q$.  

Let $L=\z\bx_1+\z\bx_2+\dots+\z\bx_n$ be a $\z$-lattice of rank $n$. For a prime $p$, let $\z_p$ be the $p$-adic integer ring. We define $L_p=L\otimes \z_p$, which is considered as  a $\z_p$-lattice.  A $\z$-lattice $M$ is said to be represented by  $L$ if there is a linear map $\sigma: M \longrightarrow L$ such that $Q(\sigma(\bx)) = Q(\bx)$ for any $\bx \in M$.  Such a map is called a representation from $M$ into $L$, which is necessarily injective because the symmetric bilinear map defined on $M$ is assumed to be nondegenerate.   If there is a linear map $\sigma_p : M_p \ra L_p$ satisfying the above property for some prime $p$, then we say $M$ is represented by $L$ over $\z_p$. We say $M$ is locally represented by $L$ if $M$ is represented by $L$ over $\z_p$ for any prime $p$.  If $M$ is represented by $L$, then we simply write $M \ra L$. In particular, if $M=\langle m\rangle$ is a unary $\z$-lattice, then we write $m \ra L$ as well as $\langle m\rangle \ra L$.  Two $\z$-lattices $L$ and $M$ are isometric if there exists a representation sending $L$ onto $M$.  In this case we will write $L \cong M$.  If $L$ is a lattice and $A$ is one of its Gram matrix, we will write $L \cong A$. We will often address a positive definite symmetric matrix as a lattice.  If $M$ is isometric to $L$ over $\z_p$ for any prime $p$, then we say $M$ is contained in the genus of $L$, and we write $M \in \gen(L)$.  The number of isometry classes in the  genus of $L$ is called the class number of $L$, and is denoted by $h(L)$. It is well known that the class number of any $\z$-lattice is always finite. It is also well known that a $\z$-lattice $K$ is locally represented by $L$ if and only if there is a $\z$-lattice $M \in \gen(L)$ such that $K \ra M$.  

 The diagonal matrix with entries $a_1, \ldots, a_n$ on its main diagonal will be denoted by $\langle a_1, \ldots, a_n\rangle$.  If $L$ and $M$ are $\z$-lattices, their orthogonal sum is denoted by $L \perp M$. The $\z$-lattice $I_n=\z \bm e_1+\z \bm e_2+\dots+\z \bm e_n$ of rank $n$ whose Gram matrix is the identity matrix is called the {\it cubic lattice} of rank $n$. Hence we have $I_n \simeq \langle 1,1,\dots,1\rangle$. The $\z$-lattice $I_n$ is frequently called the sum of $n$ squares.

%%%%%%%%%%%%%%%%%%%%%%%%%%%%%%%%%%%%%%%%%%%%%%%%%%%%%%%%%%%%%%%%%%%%%%%%%%%%%%%%%%%
%%%%%%%%%%%%%%%%%%%%%%%%%%%%%%%%%%%%%%%%%%%%%%%%%%%%%%%%%%%%%%%%%%%%%%%%%%%%%%%%%%%
%%%%%%%%%%%%%%%%%%%%%%%%%%%%%%%%%%%%%%%%%%%%%%%%%%%%%%%%%%%%%%%%%%%%%%%%%%%%%%%%%%%
%%%%%%%%%%%%%%%%%%%%%%%%%%%%%%%%%%%%%%%%%%%%%%%%%%%%%%%%%%%%%%%%%%%%%%%%%%%%%%%%%%%
%%%%%%%%%%%%%%%%%%%%%%%%%%%%%%%%%%%%%%%%%%%%%%%%%%%%%%%%%%%%%%%%%%%%%%%%%%%%%%%%%%%
%%%%%%%%%%%%%%%%%%%%%%%%%%%%%%%%%%%%%%%%%%%%%%%%%%%%%%%%%%%%%%%%%%%%%%%%%%%%%%%%%%%
%%%%%%%%%%%%%%%%%%%%%%%%%%%%%%%%%%%%%%%%%%%%%%%%%%%%%%%%%%%%%%%%%%%%%%%%%%%%%%%%%%%
%%%%%%%%%%%%%%%%%%%%%%%%%%%%%%%%%%%%%%%%%%%%%%%%%%%%%%%%%%%%%%%%%%%%%%%%%%%%%%%%%%%
\section{Isolations of  $\z$-lattices}
%%%%%%%%%%%%%%%%%%%%%%%%%%%%%%%%%%%%%%%%%%%%%%%%%%%%%%%%%%%%%%%%%%%%%%%%%%%%%%%%%%%
%%%%%%%%%%%%%%%%%%%%%%%%%%%%%%%%%%%%%%%%%%%%%%%%%%%%%%%%%%%%%%%%%%%%%%%%%%%%%%%%%%%
%%%%%%%%%%%%%%%%%%%%%%%%%%%%%%%%%%%%%%%%%%%%%%%%%%%%%%%%%%%%%%%%%%%%%%%%%%%%%%%%%%%
%%%%%%%%%%%%%%%%%%%%%%%%%%%%%%%%%%%%%%%%%%%%%%%%%%%%%%%%%%%%%%%%%%%%%%%%%%%%%%%%%%%
%%%%%%%%%%%%%%%%%%%%%%%%%%%%%%%%%%%%%%%%%%%%%%%%%%%%%%%%%%%%%%%%%%%%%%%%%%%%%%%%%%%
%%%%%%%%%%%%%%%%%%%%%%%%%%%%%%%%%%%%%%%%%%%%%%%%%%%%%%%%%%%%%%%%%%%%%%%%%%%%%%%%%%%
%%%%%%%%%%%%%%%%%%%%%%%%%%%%%%%%%%%%%%%%%%%%%%%%%%%%%%%%%%%%%%%%%%%%%%%%%%%%%%%%%%%
%%%%%%%%%%%%%%%%%%%%%%%%%%%%%%%%%%%%%%%%%%%%%%%%%%%%%%%%%%%%%%%%%%%%%%%%%%%%%%%%%%%

In this section, we introduce some notions and basic facts on $\z$-lattices which are used throughout this article. 

\begin{defn} A $\z$-lattice  $\ell$ is called irrecoverable (by its sublattices) if there is a $\z$-lattice which represents all  sublattices of $\ell$ except for $\ell$ itself. Such a $\z$-lattice is called  an isolation of $\ell$. We say $\ell$ is recoverable if there does not exist such a $\z$-lattice satisfying the above property. 
\end{defn} 

Note that a $\z$-lattice $\ell$ is irrecoverable if and only if there is an isolation of $\ell$. 
It was proved in \cite{jko} that the unary cubic $\z$-lattice $I_1=\langle 1\rangle$ is irrecoverable and hence any unary $\z$-lattice is irrecoverable.   
Furthermore, it was proved that there are exactly $15$ ternary diagonal isolations of $I_1$. For example, the ternary diagonal $\z$-lattice $\langle2,2,5\rangle$ represents all squares of integers except for $1$. A recoverable $\z$-lattice was first given in \cite{ekk} by Elkies, Kane, and Kominers. They proved that the ternary $\z$-lattice $\langle 1,1,2\rangle$ is recoverable, which answers a question of Kim, Kim and the author \cite{kko} in the negative.  For some recent development on  binary irrecoverable $\z$-lattices, see \cite{klo}.  In fact, there are infinitely many recoverable binary $\z$-lattices up to isometry including $\langle 1,4\rangle$. 

\begin{lem} The cubic $\z$-lattice $I_n$ is irrecoverable for any  positive integer $n$. 
\end{lem}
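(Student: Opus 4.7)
The plan is to construct an explicit isolation of $I_n$ of the form $L_n = I_{n-1} \perp K$, where $K$ is a $\z$-lattice with no vector of norm $1$. With such a $K$, any norm-$1$ vector of $L_n$ must lie entirely in the $I_{n-1}$ summand, so $L_n$ admits at most $n-1$ mutually orthogonal norm-$1$ vectors and in particular cannot represent $I_n$, which requires $n$ of them.

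To show that $L_n$ represents every proper sublattice $M$ of $I_n$, I would use the orthogonal splitting $M \cong I_k \perp M_0$ with $M_0$ of minimum $\ge 2$; such a splitting exists because a norm-$1$ vector $\bv$ in any $\z$-lattice $\ell$ yields $\ell = \z \bv \perp (\bv^\perp \cap \ell)$, and one iterates. Since $M$ is a proper sublattice of $I_n$ we must have $k \le n-1$, and the $I_k$ summand embeds into the $I_{n-1}$ summand of $L_n$ trivially. The task then reduces to embedding $M_0$ into $I_{n-1-k} \perp K$, where $M_0$ is a sublattice of $I_{n-k}$ of minimum $\ge 2$ and in particular has rank at most $n-k$. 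By the finiteness theorem of Kim, Kim, and the author in \cite{kko}, the set $\widetilde{\mathcal S_{I_n}}$ of proper subforms of $I_n$ admits a finite universality criterion set, so once $K$ is chosen the verification reduces to a finite check.

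The main obstacle is the choice of $K$. The naive candidate $K = \langle 2, 2, 5\rangle$ (the isolation of $I_1$ furnished by \cite{jko}) already fails for $n = 2$: every norm-$2$ vector of $\langle 1\rangle \perp \langle 2,2,5\rangle$ has all of its inner products with other vectors of $L_2$ in $2\z$, whereas the rank-$2$ sublattice of $I_2$ with Gram matrix $\left[\begin{smallmatrix} 2 & 1 \\ 1 & 5 \end{smallmatrix}\right]$ demands an odd inner product between vectors of norms $2$ and $5$. Hence $K$ must contain norm-$2$ vectors realizing odd inner products, which forces non-diagonal blocks in $K$, and its rank must be large enough (growing with $n$) to absorb every minimum-$\ge 2$ sublattice of $I_n$ when paired with $I_{n-1-k}$. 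I would therefore take $K$ to be a sufficiently large orthogonal sum of non-diagonal blocks (for instance $\left[\begin{smallmatrix} 2 & 1 \\ 1 & 5 \end{smallmatrix}\right]$ itself, possibly together with suitable scaled or further non-diagonal pieces) and finish the argument by a case analysis on the rank and Jordan components of $M_0$.
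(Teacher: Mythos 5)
Your overall frame matches the paper's: an isolation of the shape $L=I_{n-1}\perp K$ with $\min(K)\ge 2$, the observation that such an $L$ cannot represent $I_n$, the splitting $M\cong I_k\perp M_0$ of a proper sublattice with $k\le n-1$ and $\min(M_0)\ge 2$, and an appeal to the finiteness theorem of \cite{kko}. But there is a genuine gap at the decisive step: you never actually produce a $K$ that works, and the part you defer (``finish the argument by a case analysis on the rank and Jordan components of $M_0$'') is precisely the entire content of the lemma. Invoking \cite{kko} only to say that ``once $K$ is chosen the verification reduces to a finite check'' does not help, because nothing guarantees the finite check succeeds for your candidate $K$. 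Indeed your proposed $K$ (an orthogonal sum of binary blocks such as $\left(\begin{smallmatrix}2&1\\1&5\end{smallmatrix}\right)$, possibly scaled) is very unlikely to suffice: by Theorem \ref{mainlowerb} of this paper, any isolation of $I_n$ must represent $I_{n-1}\perp D_n\perp A_{n-1}\perp\cdots\perp A_{n-1}$, so $K$ must represent indecomposable root lattices such as $D_n$ and $A_{n-1}$ of rank comparable to $n$, which an orthogonal sum of fixed binary blocks cannot obviously do; at the very least this would require a substantial argument you have not sketched.

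The missing idea is to let the finiteness theorem \emph{construct} $K$ rather than merely certify it afterwards. Take a finite $\Phi(I_n)$-universality criterion set $\{\ell_1,\dots,\ell_t\}$ for the set $\Phi(I_n)$ of proper sublattices of $I_n$ (this exists by \cite{kko}), write each $\ell_i\cong I_{k_i}\perp \ell_i'$ with $\min(\ell_i')\ge 2$ and note $k_i\le n-1$ because $\ell_i\ne I_n$, and set $K=\ell_1'\perp\cdots\perp\ell_t'$. Then $L=I_{n-1}\perp K$ represents every $\ell_i$ by construction ($I_{k_i}$ goes into $I_{n-1}$ and $\ell_i'$ into its own summand), hence represents every proper sublattice of $I_n$ by the defining property of a criterion set, while your correct norm-$1$-vector argument shows $L$ does not represent $I_n$. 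With that construction no case analysis and no explicit choice of blocks is needed; without it, your proof is not complete.
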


\begin{proof}  Let $\Phi(I_n)$ be the set of all proper sublattices of $I_n$. Then by the result of \cite{kko}, there is a finite subset $\Phi^0(I_n)=\{ \ell_1,\ell_2,\dots,\ell_t\}$ of 
$\Phi(I_n)$ such that any $\Phi^0(I_n)$-universal $\z$-lattice is $\Phi(I_n)$-universal.  Without loss of generality, we may assume that there is an integer $t_0$ with $0\le t_0\le t-1$ such that $\ell_i=I_{k_i}$ for any $i=1,2,\dots,t_0$, and 
 $\ell_i=I_{k_i} \perp \ell'_i$ for any $i$ with $t_0+1\le i\le t$, where $\ell_i'$ is a $\z$-sublattice of $\ell_i$ such that $\min(\ell'_i )\ge 2$. Since $I_n \not \in \Phi(I_n)$, we have $k_i\le n-1$ for any $i=1,2,\dots,t$. Now, define
$$
L=I_{n-1} \perp \ell'_1\perp \dots\perp \ell'_t. 
$$
Then, the $\z$-lattice $L$ is $\Phi^0(I_n)$-universal and hence $\Phi(I_n)$-universal. 
Since $L$ does not represent $I_n$ itself, it is an isolation of $I_n$. This completes the proof.  
\end{proof}

Though the following lemma is well known, we provide the proof for those who are unfamiliar with this. 

\begin{lem} \label{lem1} Let $p$ be a prime and let $\ell$ be a $\z$-sublattice of $I_n$ with index $p$. Then there are integers $u_2,\dots,u_n$ with $0\le u_2\le \dots\le u_n \le \frac{p}2$ such that 
$$
\ell \simeq I_{u_2,\dots,u_n}(p):=\z(\be_2+u_2\be_1)+\z(\be_3+u_3\be_1)+\dots+\z(\be_n+u_n\be_1)+\z(p\be_1),
$$
where $\{\be_i\}_{i=1}^n$ is an orthonormal basis for $I_n$, that is, $B(\be_i,\be_j)=\delta_{ij}$ for any $i,j$ with $1\le i,j\le n$..  
\end{lem}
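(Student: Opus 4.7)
The plan is to identify $\ell$ with the kernel of a nonzero $\f_p$-linear functional on $I_n$ and then to normalize this functional using signed permutations of $\be_1,\dots,\be_n$, which act as isometries of $I_n$.

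Since $[I_n:\ell]=p$ is prime, $pI_n\subseteq\ell$ and $I_n/\ell\cong\f_p$, so $\ell=\ker\lambda$ for some nonzero homomorphism $\lambda:I_n\to\f_p$, unique up to $\f_p^{\times}$. Setting $a_i:=\lambda(\be_i)$, at least one $a_i$ is nonzero; by permuting the $\be_i$ (an isometry of $I_n$) I would first arrange $a_1\neq 0$ and then rescale $\lambda$ so that $a_1=1$. For any integers $u_i\equiv -a_i \Mod p$ with $i\geq 2$, the vectors $\be_i+u_i\be_1$ and $p\be_1$ all lie in $\ell$, and a quick determinant computation shows that their $\z$-span has index $p$ in $I_n$, so this span equals $\ell$. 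Thus $\ell\cong I_{u_2,\dots,u_n}(p)$ for some integers $u_i$.

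The key remaining point is to force each $u_i$ into $[0,p/2]$. For each fixed $i\geq 2$, the isometry $\be_i\mapsto-\be_i$ (fixing the other basis vectors) sends the generator $\be_i+u_i\be_1$ to $-\be_i+u_i\be_1$, whose $\z$-span equals that of $\be_i-u_i\be_1$, while leaving every other generator untouched. Consequently $I_{u_2,\dots,u_i,\dots,u_n}(p)\cong I_{u_2,\dots,-u_i,\dots,u_n}(p)$, so each $u_i$ may be negated independently. Combined with the freedom in choosing $u_i$ modulo $p$, this lets me take the unique representative of $\pm a_i$ in $\{0,1,\dots,\lfloor p/2\rfloor\}$.

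Finally, permuting $\be_2,\dots,\be_n$ is an isometry of $I_n$ that correspondingly permutes the generators of $I_{u_2,\dots,u_n}(p)$, so I can reorder to enforce $0\leq u_2\leq\cdots\leq u_n\leq p/2$. The only genuinely nontrivial step is the independent sign-flip in the third paragraph; the rest is routine Hermite-normal-form manipulation.
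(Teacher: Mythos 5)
Your proof is correct and follows essentially the same route as the paper: you describe $\ell$ by a single congruence modulo $p$, exhibit the generators $\be_2+u_2\be_1,\dots,\be_n+u_n\be_1,p\be_1$ inside $\ell$, conclude equality by comparing indices, and normalize the $u_i$ via the sign changes $\be_i\mapsto-\be_i$ and permutations of the basis, just as the paper does. The only difference is cosmetic: you extract the congruence from the quotient map $I_n\to I_n/\ell\cong\f_p$ instead of invoking the Invariant Factor Theorem, which incidentally lets you avoid the paper's preliminary reduction to the case $\min(\ell)\ge 2$.
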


\begin{proof}  Without loss of generality, we may assume that  $\min(\ell)\ge 2$.  By Invariant Factor Theorem,  there is a basis $\{\bx_1,\dots,\bx_n\}$ for $I_n$ such that 
$\ell=\z \bx_1+\z \bx_2+\dots+\z \bx_{n-1}+\z p\bx_n$ (see, for example,  81:11 of  \cite{om} ).  For each $i=1,2,\dots,n$, let $\be_i=a_{i1}\bx_1+\dots+a_{in}\bx_n$. From the assumption, we know that $a_{in} \not \equiv 0 \Mod p$. Let $u_i$ be the positive integer less than $p$ such that $a_{in}+u_ia_{1n} \equiv 0 \Mod p$ for any $i=2,3,\dots,n$. Then $\be_i+u_i\be_1 \in \ell$ for any $i=2,3,\dots,n$ and 
$$
I_{u_2,\dots,u_n}(p):=\z(\be_2+u_2\be_1)+\dots+\z(\be_n+u_n\be_1)+\z(p\be_1)\subset \ell \subset I_n.
$$  
Note that $I_{u_2,\dots,u_i,\dots,u_n}(p) \simeq I_{u_2,\dots,(p-u_i),\dots,u_n}(p)$. Hence
if $u_i$ is greater than $\frac {p}2$, then one may replace it with $p-u_i$ so that we may assume that $0\le u_i\le \frac p2$.  
Now, the lemma follows directly from the fact that $[I_n: I_{u_2,\dots,u_n}(p)]=[I_n:\ell]=p$.  \end{proof}

We will frequently use the following well known lemma in the next section. 

\begin{lem} \label{local}  Let $p$ be an odd prime. Let $L_p$ be a quaternary unimodular $\z_p$-lattice and let $\ell_p$ be a binary $\z_p$-lattice. If $dN_p=1$, then $\ell_p \ra N_p$.  If  $dN_p =\Delta_p$, where $\Delta_p$ is a nonsquare unit in $\z_p$, then 
 $$
 \ell_p \nra N_p   \quad \iff \quad d(\q_p\ell_p) =-\Delta_p \ \ \text{and} \ \ H_p(\ell_p)=-1,
 $$ 
where $H_p(\cdot)$ is the Hasse symbol over $\z_p$.  In particular, if $\ell_p$ represents a unit in $\z_p$, then $\ell_p \ra L_p$. 
\end{lem}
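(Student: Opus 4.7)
The plan is to normalize $L_p$ via Jordan decomposition and then translate the representation question into one about quadratic spaces over $\q_p$. Since $p$ is odd, every unimodular $\z_p$-lattice admits an orthogonal basis, and every binary unimodular $\z_p$-lattice represents every unit; combining these facts normalizes $L_p\cong\langle 1,1,1,d\rangle$ with $d=dL_p$ (as a class in $\z_p^*/(\z_p^*)^2$, hence $1$ or $\Delta_p$). Writing $V:=\q_p L_p$, one has $dV=d$ and Hasse invariant $\epsilon(V)=+1$, since for odd $p$ the Hilbert symbol of any two units over $\z_p$ is trivial. Similarly I would diagonalize $\ell_p$ and set $W:=\q_p\ell_p$.

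Next, I would invoke the standard local representation theorem (see O'Meara \cite{om}, \S 82) that for $p$ odd and $L_p$ unimodular, $\ell_p\ra L_p$ holds if and only if $W\hookrightarrow V$ as quadratic spaces over $\q_p$. By Witt's extension theorem, $W\hookrightarrow V$ iff $V\cong W\perp W'$ for some binary $W'$, whose invariants are forced by multiplicativity: $dW'=dV/dW$ and $\epsilon(W')=\epsilon(V)\epsilon(W)^{-1}(dW,dW')_p^{-1}$. The only pair of invariants unrealized by a binary form over $\q_p$ is $(d,\epsilon)=(-1,-1)$, since any $2$-dimensional form of discriminant $-1$ is the hyperbolic plane and has Hasse invariant $+1$. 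Hence the obstruction to $W\hookrightarrow V$ is $dW'=-1$ and $\epsilon(W')=-1$, which rearranges (using the triviality of Hilbert symbols of units at odd $p$) to $dW=-dV$ and $\epsilon(W)=-1$.

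When $dL_p=1$ the obstruction would require $dW=-1$ and hence $W$ to be the hyperbolic plane, forcing $\epsilon(W)=+1$, a contradiction; no obstruction exists, so $\ell_p\ra L_p$. When $dL_p=\Delta_p$ the obstruction is exactly $d(\q_p\ell_p)=-\Delta_p$ and $H_p(\ell_p)=-1$, matching the statement.

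For the ``in particular'' clause, I would argue directly. Suppose $\ell_p$ represents a unit $u$ and diagonalize $\ell_p\cong\langle u,\beta\rangle$. Since $L_p$ represents every unit by Hensel's lemma, pick $w_1\in L_p$ with $Q(w_1)=u$; as $Q(w_1)$ is a unit, $\z_p w_1$ splits off as a unimodular direct summand, giving $L_p=\z_p w_1\perp w_1^\perp$ with $w_1^\perp$ a unimodular ternary $\z_p$-lattice. For $p$ odd a unimodular ternary $\z_p$-lattice represents every element of $\z_p$ (Hensel again), so some $w_2\in w_1^\perp$ realizes $Q(w_2)=\beta$, and $e_1\mapsto w_1$, $e_2\mapsto w_2$ furnishes the embedding. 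The main technical obstacle is the reduction from $\z_p$-integral to $\q_p$-rational representation in the general case (when $\ell_p$ has no unit norm); I would lean on O'Meara's local representation machinery rather than re-derive it by hand.
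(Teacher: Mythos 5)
Your proposal is correct and in substance follows the same route as the paper: the paper's entire proof is a citation to Theorem 1 of O'Meara's 1958 paper \cite{om2}, which is exactly the reduction you invoke (integral representation by a unimodular lattice at a non-dyadic prime is governed by the underlying $\q_p$-spaces), after which your Witt/Hasse-invariant bookkeeping and the Hensel argument for the unit case are standard and accurate. One small pointer correction: that lattice-level reduction is not in \S 82 of \cite{om}, which treats only quadratic spaces over local fields; it is the content of Theorem 1 of \cite{om2} (and your ``discriminant $-1$'' for the hyperbolic plane should read determinant $-1$).
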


\begin{proof} This is a direct consequence of Theorem 1 of \cite{om2}. 
\end{proof}

A $\z$-lattice $R$ is called a {\it root lattice} if it is generated by vectors $\bx$ such that $Q(\bx)=1$ or $2$. It is well-known that any root lattice is isometric to an orthogonal direct sum of indecomposable root lattices, which are $$
I_1,  \quad  A_n \ (n\ge 1), \quad D_n  \  (n\ge 4), \quad \text{and}  \quad E_n  \ (6\le n\le 8).
$$ 
 Conway and Sloane  introduced  in \cite{cs1} a convenient way of describing a $\z$-lattice with small discriminant by using root sublattices of it (see also \cite{co} and Chapter 4 of \cite{cs2}). Throughout this article, we adopt their notation to present a $\z$-lattice with small discriminant.   For those who are unfamiliar with this notation, we briefly introduce Conway and Sloane's notation for some specific case (see also \cite{co}). 
 
Suppose that $L_1, \ldots, L_t$ are integral lattices.  For each $i = 1, \ldots, t$, let $\bx_i$ be a vector in $L_i^\#$.  We define
\begin{equation} \label{gluesum}
L_1\cdots L_t\left[\bx_1\cdots \bx_t\right]: = (L_1 \perp \cdots \perp L_t) + \z(\bx_1 + \cdots + \bx_t).
\end{equation}
Note that this lattice is integral if and only if $Q(\bx_1 + \cdots + \bx_t)$ is an integer.  If $L_i = \z[\bz_i] \cong \langle a \rangle$ for some integer $a$ and $\bx_i = \frac{\bz_i}{m}$, then in the notation $L_1\cdots L_t\left[\bx_1\cdots \bx_t\right]$,  we will use ``$a$" instead of $L_i$ and replace $\bx_i$ by $\frac{1}{m}$.

For $n \geq 1$,  the root lattice $A_n$ is
$$
A_n = \{(a_0, a_1, \ldots, a_n) \in \z^{n+1} : a_0 + \cdots + a_n = 0\},
$$
which is viewed as a sublattice in $\z^{n+1}$.  It is an integral lattice of rank $n$ and discriminant $n + 1$.
Its glue vectors are defined by
$$
\lglue i\rglue_{A_n}=\lglue i\rglue  = \left(\frac{i}{n+1}, \ldots, \frac{i}{n+1}, \frac{-j}{n+1}, \ldots, \frac{-j}{n+1} \right) \in A_n^{\#},
$$
with $j$ components equal to $i/(n+1)$, and $i$ components equal to $-j/(n+1)$, where $i + j = n+1$ and $0 \leq i \leq n$.  
As an example of illustrating \eqref{gluesum}, $A_n\, a \left[i\, \frac{1}{d}\right]$ is the lattice
$$
(A_n\perp \z\bz) + \z\left(\lglue i\rglue + \frac{\bz}{d}\right),
$$
where $\bz$ is a vector orthogonal to $A_n$ such that $Q(\bz) = a$. Another example is $A_1A_18[11\frac12]$, which is the $\z$-lattice
$$
(A_1\perp A_1\perp \z\bz)+\z\left(\lglue1\rglue+\lglue1\rglue+\frac{\bz}2\right),
$$
where $\bz$ is a vector with $Q(\bz)=8$ which is orthogonal to $A_1\perp A_1$. Hence we have
$$
A_1A_18\left[11\frac12\right]=\begin{pmatrix} 2&0&1\\0&2&1\\1&1&3\end{pmatrix}.
$$
 Note that for any $i$ with $0\le i\le n$, $Q(\lglue i\rglue) \le Q(\lglue i\rglue+\bx)$ for any $\bx \in A_n$. 

For $n\ge 4$, the root lattice $D_n$ is 
$$
D_n=\{ (a_1,a_2,\dots,a_n) \in \z^n : a_1+a_2+\dots+a_n \equiv 0 \Mod 2\}.
$$
It is an integral lattice of rank $n$ and discriminant $4$. Its glue vectors are defined by
$$
\begin{array} {rll}
&\lglue 0\rglue_{D_n}=\lglue 0\rglue=(0,0,\dots,0), \quad &Q(\lglue 0\rglue)=0\\
&\lglue 1\rglue_{D_n}=\lglue 1\rglue=\left(\frac12,\frac12,\dots,\frac12\right), \quad &Q(\lglue 1\rglue)=\frac n4\\
&\lglue 2\rglue_{D_n}=\lglue 2\rglue=(0,0,\dots,1), \quad &Q(\lglue 2\rglue)=1\\
&\lglue 3\rglue_{D_n}=\lglue 3\rglue=\left(\frac12,\frac12,\dots,-\frac12\right),  \quad &Q(\lglue 3\rglue)=\frac n4.\\
\end{array}
$$

%%%%%%%%%%%%%%%%%%%%%%%%%%%%%%%%%%%%%%%%%%%%%%%%%%%%%%%%%%%%%%%%%%%%%%%%%%%%%%%%%%%
%%%%%%%%%%%%%%%%%%%%%%%%%%%%%%%%%%%%%%%%%%%%%%%%%%%%%%%%%%%%%%%%%%%%%%%%%%%%%%%%%%%
%%%%%%%%%%%%%%%%%%%%%%%%%%%%%%%%%%%%%%%%%%%%%%%%%%%%%%%%%%%%%%%%%%%%%%%%%%%%%%%%%%%
%%%%%%%%%%%%%%%%%%%%%%%%%%%%%%%%%%%%%%%%%%%%%%%%%%%%%%%%%%%%%%%%%%%%%%%%%%%%%%%%%%%
%%%%%%%%%%%%%%%%%%%%%%%%%%%%%%%%%%%%%%%%%%%%%%%%%%%%%%%%%%%%%%%%%%%%%%%%%%%%%%%%%%%
%%%%%%%%%%%%%%%%%%%%%%%%%%%%%%%%%%%%%%%%%%%%%%%%%%%%%%%%%%%%%%%%%%%%%%%%%%%%%%%%%%%
%%%%%%%%%%%%%%%%%%%%%%%%%%%%%%%%%%%%%%%%%%%%%%%%%%%%%%%%%%%%%%%%%%%%%%%%%%%%%%%%%%%
%%%%%%%%%%%%%%%%%%%%%%%%%%%%%%%%%%%%%%%%%%%%%%%%%%%%%%%%%%%%%%%%%%%%%%%%%%%%%%%%%%%
\section{Isolations of cubic lattices of rank $2$ and $3$}
%%%%%%%%%%%%%%%%%%%%%%%%%%%%%%%%%%%%%%%%%%%%%%%%%%%%%%%%%%%%%%%%%%%%%%%%%%%%%%%%%%%
%%%%%%%%%%%%%%%%%%%%%%%%%%%%%%%%%%%%%%%%%%%%%%%%%%%%%%%%%%%%%%%%%%%%%%%%%%%%%%%%%%%
%%%%%%%%%%%%%%%%%%%%%%%%%%%%%%%%%%%%%%%%%%%%%%%%%%%%%%%%%%%%%%%%%%%%%%%%%%%%%%%%%%%
%%%%%%%%%%%%%%%%%%%%%%%%%%%%%%%%%%%%%%%%%%%%%%%%%%%%%%%%%%%%%%%%%%%%%%%%%%%%%%%%%%%
%%%%%%%%%%%%%%%%%%%%%%%%%%%%%%%%%%%%%%%%%%%%%%%%%%%%%%%%%%%%%%%%%%%%%%%%%%%%%%%%%%%
%%%%%%%%%%%%%%%%%%%%%%%%%%%%%%%%%%%%%%%%%%%%%%%%%%%%%%%%%%%%%%%%%%%%%%%%%%%%%%%%%%%
%%%%%%%%%%%%%%%%%%%%%%%%%%%%%%%%%%%%%%%%%%%%%%%%%%%%%%%%%%%%%%%%%%%%%%%%%%%%%%%%%%%
%%%%%%%%%%%%%%%%%%%%%%%%%%%%%%%%%%%%%%%%%%%%%%%%%%%%%%%%%%%%%%%%%%%%%%%%%%%%%%%%%%%

For an irrecoverable $\z$-lattice $\ell$, recall that
$$
\text{Iso}(\ell)=\min\{\text{rank}(L) : \text{$L$ is an isolation of $\ell$}\}.
$$
As mentioned in the introduction, we have $\text{Iso}(I_1)=3$. In this section, we prove that 
$$
\text{Iso}(I_2)=5 \quad \text{and} \quad  \text{Iso}(3)=6. 
$$

\begin{thm} The quinary $\z$-lattice $\langle 1,2\rangle \perp A_221[1\frac13]$  is an isolation of $I_2$ with minimal rank, and hence  $\text{Iso}(I_2)=5$.   
\end{thm}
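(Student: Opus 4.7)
The plan is to prove the two bounds $\text{Iso}(I_2) \le 5$ and $\text{Iso}(I_2) \ge 5$ separately; write $L := \langle 1, 2\rangle \perp A_221[1\frac13]$ for the exhibited quinary lattice. For the upper bound I would first unfold the Conway--Sloane notation for $A_221[1\frac13]$ to exhibit $L$ as an explicit positive-definite integral rank-$5$ lattice of discriminant $14$. Next I would verify $I_2 \nra L$: since both $\langle 2\rangle$ and $A_221[1\frac13]$ have minimum at least $2$, the norm-$1$ vectors of $L$ are only $\pm\be_1'$ from the $\langle 1\rangle$ summand, and the orthogonal complement of $\be_1'$ in $L$ has minimum $2$, so $L$ cannot contain two orthogonal norm-$1$ vectors. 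The heart of the upper bound is to show $L$ represents every proper sublattice of $I_2$. By Lemma \ref{lem1} every prime-index sublattice of $I_2$ has the form $I_u(p)$ with $0 \le u \le p/2$, and general proper sublattices arise by iterating such extensions, so the problem reduces to binary sublattices (together with unary sublattices, handled by standard results on representation of integers by lattices of rank $\ge 4$). For the binary case I would proceed by localization: at each odd prime $q \nmid 14$, $L_q$ is $\z_q$-unimodular of rank $5$ and splits off a quaternary unimodular sublattice, so Lemma \ref{local} yields representability of any binary $\z_q$-sublattice of $(I_2)_q$; at $q = 2$ and $q = 7$ a direct $\z_q$-computation completes the local analysis. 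A class-number-one property (or a suitable splitting statement) for the genus of $L$ then lifts local to global.

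For the lower bound, assume toward a contradiction that an isolation $L$ of $I_2$ has rank at most $4$. Since $\langle 1\rangle \ra I_2$ and $I_2 \nra L$, we have $L \cong \langle 1\rangle \perp K$ with $\rank K \le 3$ and $\min K \ge 2$; otherwise a norm-$1$ vector of $K$ would pair with $\be_1'$ to produce $I_2 \ra L$. The sublattice $\langle 2, 2\rangle \ra I_2$ (realized by $\z(\be_1+\be_2) + \z(\be_1-\be_2) \subset I_2$) forces $\langle 2, 2\rangle \ra K$, since no norm-$2$ vector lies in the $\langle 1\rangle$ summand. If $\rank K = 2$, an overlattice analysis shows the only integral rank-$2$ superlattice of $\langle 2, 2\rangle$ is $I_2$ itself (which has minimum $1$), so $K = \langle 2, 2\rangle$; but then $L = \langle 1, 2, 2\rangle$ cannot represent $\langle 1, 9\rangle \ra I_2$ because $Q(\langle 2, 2\rangle) \subset 2\z$. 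Hence $\rank K = 3$. To rule out this remaining case, further sublattices of $I_2$ impose additional constraints on $K$: $\langle 1, p^2\rangle \ra L$ forces $p^2 \ra K$ for every prime $p$; $I_1(p) \ra L$ forces $K$ to represent one of a short explicit list of binary forms (depending on how $\be_1'$ contributes to the image); and analogous conditions come from other $I_u(p)$. These constraints narrow $K$ to a short list of explicit candidates, and for each I would produce a specific proper sublattice of $I_2$ not represented by the corresponding $L = \langle 1\rangle \perp K$.

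The main obstacle is the global step of the upper bound: simultaneously controlling all binary sublattices of $I_2$ requires a class-number-one verification on the genus of $L$ together with careful $\z_2$- and $\z_7$-analysis, and Lemma \ref{local} must be applied with attention to the Hasse-symbol condition at each odd prime. The lower bound is finitary but depends on an exhaustive enumeration of the rank-$3$ candidates meeting the constraints above, together with a witness sublattice of $I_2$ exhibiting failure in each case.
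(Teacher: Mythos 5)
Your lower-bound argument follows essentially the same route as the paper: write any isolation as $\langle 1\rangle \perp K$ with $\min(K)\ge 2$, force $\langle 2,2\rangle \ra K$, use further prime-index sublattices such as $I_1(3)$, $I_0(3)$, $I_0(5)$ to bound the successive minima of $K$ and to kill each surviving rank-$3$ candidate with an explicit witness (the paper's Table 1 does exactly this), so that part is fine in outline. The reduction of the upper bound to prime-index binary sublattices $I_k(p)$ is also the paper's reduction.

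The genuine gap is in your global step for the upper bound. You propose to verify that each $I_k(p)$ is locally represented by $L=\langle 1,2\rangle\perp A_221[1\tfrac13]$ and then invoke a class-number-one property of $\gen(L)$, but this cannot work: for an odd prime $p$ one has $I_k(p)\otimes\z_2=(I_2)_2$, and more generally $I_k(p)\otimes\z_q=(I_2)_q$ for every prime $q\ne p$. Hence as soon as $L$ represents, say, $\langle 1,9\rangle$ and $\langle 1,25\rangle$, the lattice $I_2$ itself is locally represented by $L$ at every prime; since $I_2\nra L$, the genus of $L$ must contain a second class which does represent $I_2$, so $h(L)\ge 2$, and no statement of the form ``every binary lattice locally represented by $L$ is represented by $L$'' can be true. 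Local conditions simply cannot separate $I_2$ from its proper sublattices, so the lifting device must carry the positivity/size information that distinguishes them. The paper's resolution is exactly such a device: writing $L=\langle 2\rangle\perp M$ with $M=I_1\perp A_221[1\tfrac13]$ quaternary of class number one, it shows
$$
\widetilde{I_k(p)}:=\begin{pmatrix} 1+k^2&kp\\ kp&p^2-2\end{pmatrix}\ \ra\ M ,
$$
by a purely local analysis (at $q\notin\{2,7,p\}$, at $q=7$, at $q=p$, and at $q=2$), and then recovers $I_k(p)\ra L$ by adding the norm-$2$ generator of $\langle 2\rangle$ to the image of the second basis vector. The hypothesis that makes this legitimate is $d(\widetilde{I_k(p)})=p^2-2(k^2+1)>0$, which holds automatically for $0\le k\le p/2$ but fails for $I_2$ itself; this is precisely where the argument avoids proving $I_2\ra L$. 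Without this (or some comparable) reduction to a class-number-one lattice of smaller rank, your local-to-global step cannot be repaired, and this is the missing idea in your proposal.
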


\begin{proof} Let $L$ be an isolation of $I_2$.  Then, since $\langle 1,4\rangle \ra L$, there is a $\z$-sublattice $N$ of $L$ such that $L\simeq \langle 1\rangle \perp N$. Furthermore, since $I_2$ is not represented by $L$,  we have $\min(N)\ge 2$. Since $\langle 2,2\rangle \ra L$, we have $\langle 2,2\rangle \ra N$. Furthermore, since 
$$
I_1(3)=\z(\be_1+\be_2)+\z(3\be_2) \simeq A_118\left[1\frac12\right] \ra L \quad \text{and} \quad I_1(3) \nra \langle 1,2,2\rangle,
$$  
we have $\mu_3(N) \le 5$, where $\mu_k(N)$ is the $k$-th successive minimum of $N$ (for this, see \cite{loy}). 
Assume that the rank of $L$ is $4$. Then one may easily check that all quaternary candidates of $L$  and binary sublattices of $I_2$ that are not represented by $L$ are listed in Table 1. Therefore, there does not exist a quaternary isolation of $I_2$. 

\begin{table} [h]
\caption{Quaternary candidates and their exceptions}
\centering
\begin{tabular}{l c}
\hline
Quaternary candidates & An exception \\
\hline\hline
$\langle1,2,2,b\rangle \  \  $                                                       for $2\le b\le 5$          & $I_1(3)$\\
\hline
$\langle 1,2\rangle \perp A_2$                & $I_0(3)$ \\
\hline
$\langle 1,2\rangle \perp A_110[1\frac12]$                & $I_0(5)$ \\
\hline
$\langle 1,2\rangle \perp A_114[1\frac12] $                                   & $I_1(3)$ \\
\hline
$\langle 1,2\rangle \perp  A_118[1\frac12]$                                   & $I_2(5)$ \\
\hline
$\langle 1\rangle \perp A_3 $                 & $I_0(3)$ \\
\hline
$\langle 1\rangle \perp A_1A_18[11\frac12] $                 & $I_2(5)$ \\
\hline
$\langle 1\rangle \perp  A_1A_112[11\frac12]  $                 & $I_0(3)$ \\
\hline
$\langle 1\rangle \perp  A_1A_116[11\frac12]  $                 & $I_0(5)$ \\
\hline
\end{tabular}
\end{table}

Now, we will show that the quinary $\z$-lattice $L=\langle1,2\rangle \perp A_221[1\frac13]$ is an isolation of $I_2$. 
It suffices to show that for any prime $p$ and an integer $k$ with $0\le k\le \frac p2$, the binary $\z$-sublattice 
$$
I_k(p)=\z(e_2+ke_1)+\z(pe_2)=\begin{pmatrix} 1+k^2&kp\\kp&p^2\end{pmatrix}
$$
of $I_2$ with index $p$ is represented by $L$ by Lemma \ref{lem1}.  Since both $I_0(2)=\langle 1,4\rangle$ and $I_1(2)=\langle 2,2\rangle$ are represented by  $L$, we may assume that $p\ge 3$.   To show the existence of a representation, we prove that 
\begin{equation} \label{3}
\widetilde{I_k(p)}:=\begin{pmatrix} 1+k^2&kp\\kp&p^2-2\end{pmatrix} \ra M:= I_1\perp A_221\left[1\frac13\right].
\end{equation}
Note that the class number of the quaternary $\z$-lattice $M$ is one. Furthermore, since $d(\widetilde{I_k(p)})=p^2-2(k^2+1)>0$,  it suffices to show that $\widetilde{I_k(p)}_q$  is  represented by $M_q$ over $\z_q$ for any prime $q$. 

Let $q$ be any prime not contained in $\{2,7,p\}$.  Since $(d(\widetilde{I_k(p)}),k^2+1,q)=1$,  either $\widetilde{I_k(p)}_q$ is unimodular over $\z_q$ or it represents a unit in $\z_q$. Therefore the unimodular 
$\z_q$-lattice $M_q$ represents $\widetilde{I_k(p)}_q$ over $\z_q$ by Lemma \ref{local}. 
If $q=p\ne 7$, then $p^2-2$ is a unit in $\z_q$  and  $M_q$ is unimodular. Assume that $q=7 \ne p$. If $p^2-2(k^2+1) \not \equiv 0 \Mod 7$, then the unimodular $\z_7$-lattice $\widetilde{I_k(p)}_7$ is represented by $M_7 \simeq \langle1,1,-1,-7\rangle$.  
 If $p^2-2(k^2+1)  \equiv 0 \Mod 7$, then $k^2+1$ is a square unit in $\z_q$. Hence   $\widetilde{I_k(p)}_7$ is represented by $M_7$ over $\z_7$. If $q=p=7$, then  $\widetilde{I_k(p)}_7$ is a unimodular $\z_7$-lattice. Hence it is represented by $M_7$ over $\z_7$.  Finally, assume that $q=2$. Note that 
 $$
 d(\widetilde{I_k(p)})=p^2-2(k^2+1) \equiv 5,7\Mod 8 \quad \text{and} \quad \text{$\widetilde{I_k(p)}$ is an odd $\z$-lattice.} 
 $$
Therefore $\widetilde{I_k(p)}_2$ is represented by $M_2 \simeq \langle 3,3,3,5\rangle$ over $\z_2$. This completes the proof. 
\end{proof}

\begin{thm} \label{3main}
The $\z$-lattice $I_2\perp A_3\perp\langle 3\rangle$ of rank $6$ is an isolation of $I_3$ with minimal rank, and hence $\text{Iso}(I_3)=6$.   
\end{thm}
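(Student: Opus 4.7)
The plan is to show both the upper bound---that $L := I_2 \perp A_3 \perp \langle 3 \rangle$ is an isolation of $I_3$---and the matching lower bound $\text{Iso}(I_3) \ge 6$. The non-representation $I_3 \nra L$ is immediate: the summand $A_3 \perp \langle 3 \rangle$ has minimum $2$, so every unit vector of $L$ lies in the $I_2$ summand, and $L$ therefore contains at most two mutually orthogonal unit vectors.

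For the upper bound, every proper sublattice of $I_3$ is contained in some index-$p$ sublattice of $I_3$ (since any finite abelian group whose order is divisible by $p$ admits a quotient of order $p$), so by transitivity of representation it suffices to show that $L$ represents every index-$p$ sublattice of $I_3$. By the $n = 3$ case of Lemma \ref{lem1}, these are the lattices $I_{u_2, u_3}(p)$ with $0 \le u_2 \le u_3 \le p/2$. For $p = 2$ there are only three such sublattices, namely $\langle 1,1,4\rangle$, $\langle 1,2,2\rangle$, and $A_3$, and each is visibly a sublattice of $L$. For odd $p$, the strategy mimics the proof of Theorem 3.1: split off appropriate unit (or small-norm) summands and represent an auxiliary form obtained from $I_{u_2,u_3}(p)$ by a local-global argument, verifying class number one (or uniform genus-representation) for the relevant sublattice of $L$ and then checking local representability at the bad primes $2$, $3$, and $p$ by means of Lemma \ref{local} together with direct unit-representation arguments.

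For the lower bound, suppose $L$ is an isolation of $I_3$ with $\rank(L) \le 5$. Since $\langle 1,1 \rangle \ra L$ but $I_3 \nra L$, we may write $L \simeq I_2 \perp N$ with $\rank(N) \le 3$ and $\min(N) \ge 2$. The requirement that $L$ represent every proper sublattice of $I_3$---in particular the ternary sublattices $A_3$, $\langle 1,2,2\rangle$, $A_2 \perp \langle 3 \rangle = I_{1,1}(3)$, and $\langle 1 \rangle \perp I_1(3)$, together with further $I_{u_2,u_3}(p)$ for small primes $p$---restricts $N$ to a short finite list of rank-$3$ candidates. For each candidate one then produces an explicit index-$p$ sublattice of $I_3$ that is not represented by $I_2 \perp N$, yielding a table analogous to Table 1 in the proof of Theorem 3.1.

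The hard part will be this lower-bound case analysis: balancing the constraint $\min(N) \ge 2$ with the various required ternary representations and with the discriminant bounds forced by the larger $I_{u_2,u_3}(p)$ requires some care, and an explicit non-represented sublattice must be exhibited for each admissible $N$. The upper-bound direction, by contrast, reduces to the same local-global machinery already developed for Theorem 3.1, so its difficulty is mostly computational and centered on the three bad primes.
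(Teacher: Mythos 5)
Your overall architecture matches the paper's (reduce to the prime-index sublattices $I_{a,b}(p)$, treat small $p$ directly, handle odd $p$ by splitting off a summand of $L$ and running a genus/local argument for a residual form, and rule out rank $5$), and your reduction to index-$p$ sublattices is fine, though your justification only covers full-rank sublattices; for lower-rank ones the quotient $I_3/\ell$ is infinite, but it still surjects onto $\z/p\z$, so the statement holds. The genuine gap is in the odd-$p$ step, and ``mimicking Theorem 3.1'' does not fill it. The natural auxiliary form is $\widetilde{I_{a,b}(p)}$, obtained by lowering the entry $p^2$ to $p^2-3$ so as to absorb the $\langle 3\rangle$ summand; its determinant is $p^2-3(a^2+b^2+1)$, and since $a,b$ can be as large as $p/2$ this may be negative, so the auxiliary form need not be positive definite and the class-number-one/genus argument cannot even be set up. In the binary case this issue never arises ($p^2-2(k^2+1)>0$ automatically for $k\le p/2$), which is exactly why the ternary case needs an extra idea: when $a^2+b^2>\frac{p^2}{3}-1$ the paper rewrites $I_{a,b}(p)$ on the basis $\be_1-\be_2+(a-b)\be_3$, $\be_1+\be_2+(a+b-p)\be_3$, $\be_1+a\be_3$, whose parameters $c=a-b$ and $d=a+b-p$ are small, and only then subtracts $3$ from a diagonal entry to form $I(c,d)\ra I_2\perp A_3$. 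Your sketch contains no substitute for this step, so as written it fails for a large range of pairs $(a,b)$; the remaining deferred items (class number one for $I_2\perp A_3$ and the local computations at $2$ and $p$ --- note that $3$ is not a bad prime once $\langle 3\rangle$ has been split off) are routine but also not carried out.

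On the lower bound your plan would work but is heavier than necessary, and your difficulty assessment is inverted. Since $A_3=I_{1,1}(2)\ra N$ with $\rank(N)=3$ and $\min(N)\ge 2$, discriminant considerations force $N\simeq A_3$ (an overlattice of $A_3$ of index $f$ has discriminant $4/f^2$, and discriminant $1$ would give $I_3$, contradicting $\min(N)\ge2$), so the only rank-$5$ candidate is $I_2\perp A_3$, and this fails to represent $I_{1,1}(3)\simeq A_2\perp\langle 3\rangle$; no candidate table or further case analysis is needed. The genuinely hard part of the theorem is precisely the odd-$p$ representation argument you deferred as ``mostly computational.''
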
 

\begin{proof}  Let $L$ be an isolation of $I_3$. Then $L\simeq I_2\perp N$ for some $\z$-sublattice $N$ of $L$  such that $\min(N) \ge 2$. Since $A_3=I_{1,1}(2)$ is represented by $L$, it is represented by $N$. Therefore if the rank  of $L$ is $5$, then $L\simeq I_2\perp A_3$. However, one may easily check that  $I_{1,1}(3)\simeq A_2 \perp \langle 3\rangle \nra  I_2\perp A_3$.  Hence the rank of $L$ is greater than $5$ and $\mu_4(N) \le 3$. Therefore if the rank of $L$ is $6$, then all possible candidates are 
$$
L \simeq I_2\perp A_3\perp \langle 3\rangle, \quad  I_2\perp A_336\left[1\frac14\right],  \quad \text{or} \quad I_2\perp A_38\left[2\frac12\right].
$$
Note that the third one does not represent $I_{1,1}(3)$. 

Now, we prove that $L=I_2\perp A_3\perp \langle 3\rangle$ is an isolation of $I_3$. To prove this, it suffices to show that for any prime $p$, the $\z$-sublattice
$$
I_{a,b}(p)=\z(\be_1+a\be_3)+\z(\be_2+b\be_3)+\z(p\be_3)  \quad \left(0\le a\le b\le \frac p2\right)
$$
 of $I_3$ with index $p$ is represented by $L$.  If $p\le 5$, then we may directly chcek that $I_{a,b}(p) \ra L$. Hence we assume that $p\ge7$. 
 
 First, assume that $p^2>3(a^2+b^2+1)$. Note that to show $I_{a,b}(p) \ra L$, it suffices to show that
 \begin{equation} \label{3}
 \widetilde{I_{a,b}(p)} :=\begin{pmatrix} 1+a^2&ab&ap\\ap&1+b^2&bp\\ap&bp&p^2-3\end{pmatrix} \ra M:=I_2\perp A_3.
 \end{equation}  
 Since the class number of $M$ is one and $d(\widetilde{I_{a,b}(p)})=p^2-3(a^2+b^2+1)>0$ from the assumption, it suffices to show that  $\widetilde{I_{a,b}(p)}_q$ is represented by $M_q$ over $\z_q$ for any prime $q$.
 
  If $q \ne 2,p$, then $(p^2-3(a^2+b^2+1),a^2+b^2+1,q)=1$. Hence $\widetilde{I_{a,b}(p)}_q$ has a binary unimodular component over $\z_q$. Therefore it is represented by the unimodular $\z_q$-lattice $M_q$.  Assume that $q=p$. Note that $M_q$ is a quinary unimodular $\z_q$-lattice with $dM_q=1$. If $q$ does not divide $a^2+b^2+1$, then  $\widetilde{I_{a,b}(p)}_q$ is unimodular over $\z_q$ and hence it is represented by $M_q$ over $\z_q$. Assume that $q$ divides $a^2+b^2+1$. Then one may easily show that the binary $\z_q$-sublattice of $ \widetilde{I_{a,b}(p)}_q$ 
 $$
 \begin{pmatrix} 1+a^2&ap\\ap&p^2-3\end{pmatrix} \quad \text{or} \quad  \begin{pmatrix} 1+b^2&bp\\bp&p^2-3\end{pmatrix}
 $$
 is unimodular over $\z_q$.  Therefore $\widetilde{I_{a,b}(p)}_q$ is represented by $M_q$ over $\z_q$. 
 
 Finally, assume that $q=2$. First, assume that $a\equiv b\equiv 0\Mod 2$. 
 Then $d(\widetilde{I_{a,b}(p)})=p^2-3(a^2+b^2+1) \equiv 2 \Mod 4$  and $\z_2(\be_1+a\be_3)+\z_2(\be_2+b\be_3) \simeq \langle 1,1\rangle$ or $\langle 1,5\rangle$ over $\z_2$. Hence $\widetilde{I_{a,b}(p)}_2 \simeq \langle 1,\rho,2\epsilon\rangle$ over $\z_2$, where  $\rho \equiv 1 \Mod 4$ and $\epsilon \in \z_2^{\times}$.  Therefore, by Theorem 3 of \cite{om2}, we have 
 $$
 \widetilde{I_{a,b}(p)}_2  \ra M_2\simeq \langle 1,3,3,3,12\rangle \ \ \text{over $\z_2$}.
 $$
 Assume that $a\not \equiv b \Mod 2$. Without loss of generality, we assume that $a$ is odd. Since 
 $$
d(\widetilde{I_{a,b}(p)})=p^2-3(a^2+b^2+1) \equiv 3 \Mod 4 \  \text{and} \ \begin{pmatrix} 1+a^2&ap\\ap&p^2-3\end{pmatrix} \simeq \begin{pmatrix} 2&1\\1&2\end{pmatrix} \ \text{over $\z_2$},
$$
 we have $\widetilde{I_{a,b}(p)}_2 \simeq A_2 \perp \langle \rho\rangle$ over $\z_2$ for some $\rho\equiv 1 \Mod 4$. Therefore it is represented by $M_2$ over $\z_2$.  Finally, assume that $a\equiv b\equiv 1 \Mod 2$. In this case, we have
   $$
d(\widetilde{I_{a,b}(p)})=p^2-3(a^2+b^2+1) \equiv 0 \Mod 8 \  \text{and} \ \begin{pmatrix} 1+a^2&ab\\ab&1+b^2\end{pmatrix} \simeq \begin{pmatrix} 2&1\\1&2\end{pmatrix} \ \text{over $\z_2$}.
$$
Therefore, for some $\alpha \in \z_2$, we have 
$$
(\widetilde{I_{a,b}(p)})_2 \simeq \begin{pmatrix} 2&1\\1&2\end{pmatrix} \perp \langle8\alpha\rangle \ra M_2 \simeq \begin{pmatrix} 2&1\\1&2\end{pmatrix} \perp \langle 1,1,12\rangle. 
$$ 

Now, assume that $a^2+b^2>\frac {p^2}3-1$.  Since
$$
I_{a,b}(p)=\z(\be_1-\be_2+(a-b)\be_3)+\z(\be_1+\be_2+(a+b-p)\be_3)+\z(\be_1+a\be_3),
$$  
we have 
$$
I_{a,b}(p) \simeq \begin{pmatrix} 2+c^2&cd&1+ca\\cd&2+d^2&1+da\\1+ca&1+da&1+a^2\end{pmatrix},
$$
 where $c=a-b$ and $d=a+b-p$. Since $0\le a \le b \le \frac p2$,  one may easily show that $\vert c\vert, \vert d \vert <\frac p2-\sqrt{\frac{p^2}{12}-1}$. To show that $I_{a,b}(p) \ra L$, it suffices to show that 
 $$
 I(c,d):= \begin{pmatrix} 2+c^2&cd&1+ca\\cd&2+d^2&1+da\\1+ca&1+da&a^2-2\end{pmatrix} \ra M=I_2\perp A_3.
 $$
Since $a \ge 2$ and we are assuming that $p\ge 7$, we have
 $$
 d(I(c,d))=p^2-6(c^2+d^2+2) \ge p^2-12\left(\frac{p^2}4+\frac{p^2}{12}-1-p\sqrt{\frac{p^2}{12}-1}+1\right)>0,
 $$
 which implies that  $I(c,d)$ is positive definite.  Assume that $q\ne 2,p$. Note that $I(c,d)_q$ has a binary unimodular component over $\z_q$. Therefore it is represented by $M_q$ over $\z_q$.
   Assume that    $q=p$. If $q$ does not divide $c^2+d^2+2$, then $I(c,d)_q$ has a binary unimodular component over $\z_q$ and hence $I(c,d)$ is represented by $M$ over $\z_q$.  Therefore we may assume that $q$ divides $c^2+d^2+2$. Furthermore, we may also assume that for any $w \in \{c,d\}$,
 $$
 \det\begin{pmatrix} 2+w^2&1+wa\\1+wa&a^2-2\end{pmatrix}=2a^2-2w^2-2aw-5\equiv 0 \Mod q. 
 $$ 
Therefore $q$ divides $c-d$ and also divides $c^2+1$. In this case, since $c^2+2$ is a unit square in $\z_q$, $I(c,d)_q$ is represented by $M_q$ over $\z_q$ by Lemma \ref{local}. 

Finally, assume that $q=2$. Since $c \not \equiv d \Mod 2$, we have
$$
d(I(c,d))=p^2-6(c^2+d^2+2) \equiv 7 \Mod 8.
$$ 
Furthermore, since 
$$
d(\z(\be_1-\be_2+c\be_3)+\z(\be_1+\be_2+d\be_3))=2(c^2+d^2+2) \equiv 6\Mod 8,
$$
we have 
$$
 \z_2(\be_1-\be_2+c\be_3)+\z_2(\be_1+\be_2+d\be_3) \simeq \langle 3,2\rangle \ \text{or} \ \langle 3,10\rangle.
$$
Therefore $I(c,d)_2 \simeq \langle -1,-1,-1\rangle$ over $\z_2$, which is represented by $M_2$ over $\z_2$. This completes the proof.      
\end{proof}

\begin{cor} The quinary $\z$-lattice $L=I_1\perp A_3\perp \langle3\rangle$ is an isolation of $I_2$. 
\end{cor}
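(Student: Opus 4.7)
The plan is to extract this corollary directly from Theorem \ref{3main}, with no new local computation required. The pivot is the following observation: for any prime $p$ and any $0\le k\le p/2$, the proper sublattice $I_{0,k}(p)$ of $I_3$ (in the notation of the proof of Theorem \ref{3main}) splits orthogonally as $\langle 1\rangle \perp I_k(p)$, since its first basis vector $\be_1$ is orthogonal to the other two generators. Hence any representation of $I_{0,k}(p)$ into the rank-$6$ isolation $L':=I_2\perp A_3\perp \langle 3\rangle$ of $I_3$ must send the unit basis vector $\be_1$ to some unit vector in $L'$, and will automatically restrict to a representation of $I_k(p)$ into the orthogonal complement of that image inside $L'$.

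First I would verify $I_2\nra L$. Since the summands $A_3$ and $\langle 3\rangle$ both have minimum at least $2$, the only norm-$1$ vectors in $L$ are $\pm\bw$ for a generator $\bw$ of the $I_1$ summand, and $B(\bw,-\bw)=-1\ne 0$; so $L$ contains no orthogonal pair of unit vectors, whence $I_2\nra L$. Next, to show that $L$ represents every proper sublattice of $I_2$, I reduce via transitivity of representation together with Lemma \ref{lem1} to the assertion that $I_k(p)\ra L$ for every prime $p$ and $0\le k\le p/2$: any proper sublattice of $I_2$ with index $n$ is contained in a sublattice of $I_2$ of index $p$ for any prime $p\mid n$, and every such prime-index sublattice has the form $I_k(p)$ by Lemma \ref{lem1}.

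Finally, fix $p$ and $k$ with $0\le k\le p/2$. Since $I_{0,k}(p)$ is a proper sublattice of $I_3$, Theorem \ref{3main} yields a representation $\sigma:I_{0,k}(p)\ra L'$. The vector $\sigma(\be_1)$ has norm $1$ in $L'$, and the same minimum argument as above shows that every norm-$1$ vector of $L'$ lies in its $I_2$ summand; hence $\sigma(\be_1)=\pm\be_i'$ for some $i\in\{1,2\}$, where $\be_1',\be_2'$ denote the standard basis of that $I_2$ summand. Because $\be_1$ is orthogonal to $\z(\be_2+k\be_3)+\z(p\be_3)\simeq I_k(p)$ inside $I_{0,k}(p)$, the image $\sigma(I_k(p))$ lands in the orthogonal complement of $\sigma(\be_1)$ in $L'$, which is $I_1\perp A_3\perp \langle 3\rangle=L$. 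Hence $I_k(p)\ra L$, as required. The only conceptual point to be checked is the identification of the $a=0$ case of Theorem \ref{3main} as encoding precisely the information needed here; no new technical obstacle arises, since all of the local analysis has already been performed in the proof of Theorem \ref{3main}.
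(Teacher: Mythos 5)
Your proof is correct and is essentially the paper's argument: the paper also represents a proper sublattice $\ell$ of $I_2$ by viewing $I_1\perp\ell$ as a proper sublattice of $I_3$, invoking Theorem \ref{3main}, and (implicitly) splitting off the image of the unit vector, which must land in the $I_2$ summand of $I_2\perp A_3\perp\langle 3\rangle$. Your preliminary reduction to the prime-index sublattices $I_k(p)$ via Lemma \ref{lem1} is harmless but unnecessary, since the argument applies verbatim to an arbitrary proper sublattice $\ell$ of $I_2$; otherwise you have simply made explicit the steps the paper leaves tacit.
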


\begin{proof} For any proper sublattice $\ell$ of $I_2$, the ternary $\z$-sublattice $I_1\perp \ell$ of $I_3$ is represented by $I_2\perp A_3\perp \langle 3\rangle$ by Theorem \ref{3main}. Hence $\ell$ is represented by $L$. 
\end{proof}

%%%%%%%%%%%%%%%%%%%%%%%%%%%%%%%%%%%%%%%%%%%%%%%%%%%%%%%%%%%%%%%%%%%%%%%%%%%%%%%%%%%
%%%%%%%%%%%%%%%%%%%%%%%%%%%%%%%%%%%%%%%%%%%%%%%%%%%%%%%%%%%%%%%%%%%%%%%%%%%%%%%%%%%
%%%%%%%%%%%%%%%%%%%%%%%%%%%%%%%%%%%%%%%%%%%%%%%%%%%%%%%%%%%%%%%%%%%%%%%%%%%%%%%%%%%
%%%%%%%%%%%%%%%%%%%%%%%%%%%%%%%%%%%%%%%%%%%%%%%%%%%%%%%%%%%%%%%%%%%%%%%%%%%%%%%%%%%
%%%%%%%%%%%%%%%%%%%%%%%%%%%%%%%%%%%%%%%%%%%%%%%%%%%%%%%%%%%%%%%%%%%%%%%%%%%%%%%%%%%
%%%%%%%%%%%%%%%%%%%%%%%%%%%%%%%%%%%%%%%%%%%%%%%%%%%%%%%%%%%%%%%%%%%%%%%%%%%%%%%%%%%
%%%%%%%%%%%%%%%%%%%%%%%%%%%%%%%%%%%%%%%%%%%%%%%%%%%%%%%%%%%%%%%%%%%%%%%%%%%%%%%%%%%
%%%%%%%%%%%%%%%%%%%%%%%%%%%%%%%%%%%%%%%%%%%%%%%%%%%%%%%%%%%%%%%%%%%%%%%%%%%%%%%%%%%
%%%%%%%%%%%%%%%%%%%%%%%%%%%%%%%%%%%%%%%%%%%%%%%%%%%%%%%%%%%%%%%%%%%%%%%%%%%%%%%%%%%
%%%%%%%%%%%%%%%%%%%%%%%%%%%%%%%%%%%%%%%%%%%%%%%%%%%%%%%%%%%%%%%%%%%%%%%%%%%%%%%%%%%
\section{A non-linear lower bound for $\text{Iso}(I_n)$}
%%%%%%%%%%%%%%%%%%%%%%%%%%%%%%%%%%%%%%%%%%%%%%%%%%%%%%%%%%%%%%%%%%%%%%%%%%%%%%%%%%%
%%%%%%%%%%%%%%%%%%%%%%%%%%%%%%%%%%%%%%%%%%%%%%%%%%%%%%%%%%%%%%%%%%%%%%%%%%%%%%%%%%%
%%%%%%%%%%%%%%%%%%%%%%%%%%%%%%%%%%%%%%%%%%%%%%%%%%%%%%%%%%%%%%%%%%%%%%%%%%%%%%%%%%%
%%%%%%%%%%%%%%%%%%%%%%%%%%%%%%%%%%%%%%%%%%%%%%%%%%%%%%%%%%%%%%%%%%%%%%%%%%%%%%%%%%%
%%%%%%%%%%%%%%%%%%%%%%%%%%%%%%%%%%%%%%%%%%%%%%%%%%%%%%%%%%%%%%%%%%%%%%%%%%%%%%%%%%%
%%%%%%%%%%%%%%%%%%%%%%%%%%%%%%%%%%%%%%%%%%%%%%%%%%%%%%%%%%%%%%%%%%%%%%%%%%%%%%%%%%%
%%%%%%%%%%%%%%%%%%%%%%%%%%%%%%%%%%%%%%%%%%%%%%%%%%%%%%%%%%%%%%%%%%%%%%%%%%%%%%%%%%%
%%%%%%%%%%%%%%%%%%%%%%%%%%%%%%%%%%%%%%%%%%%%%%%%%%%%%%%%%%%%%%%%%%%%%%%%%%%%%%%%%%%

In this section, we prove that the minimum rank $\text{Iso}(I_n)$ of isolations of a cubic lattice $I_n$ has a non linear lower bound. More precisely, we show that $\text{Iso}(I_n) \in \Omega(n^{\frac 32-\epsilon})$ for any $\epsilon >0$. 

Let $L$ be a $\z$-lattice. The $\z$-sublattice of $L$ generated by vectors of norm $1$ or $2$ is denoted by $R_L$. Let $\ell$ be a $\z$-sublattice of $L$. If $\ell$ is an indecomposable root sublattice of $L$,  we denote the indecomposable component of $R_L$ containing $\ell$ by $R_L(\ell)$.   For any $\bx \in L$,  we define the projection $\text{Proj}_{\ell}(\bx) \in \q\ell$ of $\bx$ on $\ell$ and the projection $\text{Proj}_{\ell^{\perp}}(\bx)  \in \q \ell^{\perp}$ of $\bx$ on $\ell^{\perp}=\{ \bz \in L : B(\bz,\ell)=0\}$ such that 
$$
\bx=\text{Proj}_{\ell}(\bx)+\text{Proj}_{\ell^{\perp}}(\bx).
$$

For any positive integers $n$ and $k$ with $1< k \le n-1$, we define
$$
\begin{array} {rl}
A_{n,k}:\!\!\!&=A_{n-1}(k^2n)\displaystyle \left[k\frac1n\right] \\
             &=\z(-\be_1+\be_2)+\dots+\z(-\be_{n-1}+\be_n)+\z(-(\be_{n-k+1}+\dots+\be_n)).
             \end{array}
$$
Note that $A_{n,k}$ is a $\z$-sublattice of $I_n$ with index $k$ and  $d(A_{n,k})=k^2$. The vector $-(\be_{n-k+1}+\be_{n-k+2}+\dots+\be_n) \in A_{n,k}$ will always  be denoted by $\bx_{n,k}$. Note that $\bx_{n,k}=\lglue k\rglue+\frac 1n \bx_0$, where $\bx_0=-k(\be_1+\be_2+\dots+\be_n) \in  A_{n-1}^{\perp}$ is a vector such that $Q(\bx_0)=k^2n$.

\begin{lem} \label{noidentity}   Assume that there is a representation $\phi: A_{n,k} \to I_{n-1} \perp L$ for some $\z$-lattice $L$ with $\min(L) \ge 2$.  If $1<k<\sqrt n$, then we have  
$\phi(A_{n,k}) \subset L$. 
\end{lem}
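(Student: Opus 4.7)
The plan is to analyse where the natural $\z$-basis of $A_{n,k}$---the $A_{n-1}$ chain $\bm a_i := -\be_i + \be_{i+1}$ for $1\le i\le n-1$ together with the glue vector $\bx_{n,k}$---can land under $\phi$, and to use the hypothesis $k<\sqrt n$ as a quantitative wedge. First I would argue that each $\phi(\bm a_i)$ lies entirely in $I_{n-1}$ or entirely in $L$: since $\min(L)\ge 2$ and the minimum of $I_{n-1}$ is $1$, any $\phi(\bm a_i)$ with a nontrivial component in both summands would have norm at least $1+2=3$, contradicting $Q(\bm a_i)=2$. The relations $B(\bm a_i,\bm a_{i+1})=-1$ together with $B(I_{n-1},L)=0$ then propagate along the $A_{n-1}$ Dynkin diagram and force all $\phi(\bm a_i)$ into a common component; i.e.\ either $\phi(A_{n-1})\subset I_{n-1}$ or $\phi(A_{n-1})\subset L$.

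Next I would write $\phi(\bx_{n,k})=\by_1+\by_2$ with $\by_1\in I_{n-1}$, $\by_2\in L$, giving $Q(\by_1)+Q(\by_2)=k$. The critical input from the glue presentation $A_{n,k}=A_{n-1}(k^2n)[k\,\tfrac1n]$ is that the orthogonal projection of $\bx_{n,k}$ onto $\q A_{n-1}$ is precisely the glue vector $\lglue k\rglue$, which has norm $k(n-k)/n=k-k^2/n$. Extending $\phi$ $\q$-linearly and using $B(\phi(\bm a_i),\phi(\bx_{n,k}))=B(\bm a_i,\bx_{n,k})=-\delta_{i,n-k}$ together with the orthogonality of $I_{n-1}$ and $L$, I would identify the projection of whichever of $\by_1,\by_2$ lies in the same summand as $\phi(A_{n-1})$ with $\phi(\lglue k\rglue)$, and hence bound its norm below by $k-k^2/n$.

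Two cases remain. If $\phi(A_{n-1})\subset L$, the bound becomes $Q(\by_2)\ge k-k^2/n>k-1$ (using $k^2<n$), and since $Q(\by_2)\in\z_{\ge 0}$ this upgrades to $Q(\by_2)\ge k$, forcing $Q(\by_1)\le 0$, so $\by_1=0$ and $\phi(\bx_{n,k})\in L$; combined with $\phi(A_{n-1})\subset L$, this gives the desired $\phi(A_{n,k})\subset L$. If instead $\phi(A_{n-1})\subset I_{n-1}$, the symmetric bound gives $Q(\by_2)\le k^2/n<1$, so $\by_2=0$ and $\phi(A_{n,k})\subset I_{n-1}$; this contradicts injectivity of $\phi$, since $A_{n,k}$ has rank $n$ while $I_{n-1}$ has rank only $n-1$.

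The main obstacle, in my view, is making the projection identity $\text{Proj}_{\q\phi(A_{n-1})}(\by_2)=\phi(\lglue k\rglue)$ precise: since $\lglue k\rglue$ lives in $A_{n-1}^{\#}\setminus A_{n-1}$, one has to extend $\phi$ to rational spans and verify that $\phi(\lglue k\rglue)$ is characterised by the prescribed inner products with the $\phi(\bm a_i)$. Everything else is bookkeeping, and the inequality $k^2<n$ is used in exactly the minimal way to upgrade the fractional lower bound $k-k^2/n$ to the integer bound $k$.
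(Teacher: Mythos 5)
Your proof is correct, and its core coincides with the paper's: identify the orthogonal projection of $\phi(\bx_{n,k})$ onto $\q\phi(A_{n-1})$ with the ($\q$-linear) image of the glue vector $\lglue k\rglue$, whose norm $\frac{k(n-k)}n=k-\frac{k^2}n$ exceeds $k-1$ precisely because $k<\sqrt n$, and use that bound to kill the component of $\phi(\bx_{n,k})$ orthogonal to the summand containing $\phi(A_{n-1})$. Where you genuinely diverge is the preliminary step $\phi(A_{n-1})\subset L$: the paper gets this in one line by citing that $A_{n-1}$ is not represented by $I_{n-1}$ (implicitly combined with the norm-$2$/connectivity dichotomy you spell out), whereas you eliminate the alternative $\phi(A_{n-1})\subset I_{n-1}$ using the same projection bound to force the $L$-component to vanish and then a rank count ($A_{n,k}$ has rank $n$, $I_{n-1}$ only $n-1$). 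Your variant buys self-containedness, since it does not rely on the unproved non-representability fact, at essentially no extra cost. Two minor points: in the case $\phi(A_{n-1})\subset L$ you do not need the integrality of $L$ invoked in the step ``$Q(\by_2)\in\z_{\ge0}$''; it is simpler to note $Q(\by_1)=k-Q(\by_2)<1$ with $\by_1\in I_{n-1}$, which already gives $\by_1=0$ (this is how the paper argues). And the projection identification you flag as the main obstacle is handled exactly as you describe: the projection onto $\q\phi(A_{n-1})$ is the unique vector there having the prescribed inner products with the vectors $\phi(-\be_i+\be_{i+1})$, and since $\phi$ preserves inner products this vector is the $\q$-linear image of $\lglue k\rglue$ and has the same norm.
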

\begin{proof} Since $A_{n-1}$ is not represented by $I_{n-1}$, we have $\phi(A_{n-1}) \subset L$.  Suppose that $\phi(\bx_{n,k})=\bu+\bv$, where $\bu \in I_{n-1}$ and $\bv \in L$. Then clearly, $\text{Proj}_{\phi(A_{n-1})}(\bv)=\phi( \lglue k\rglue)$. Since
$$
\begin{array} {rl} 
k=Q(\phi(\bx_{n,k}))&=Q(\bu)+Q(\bv) \ge Q(\bu)+Q(\text{Proj}_{\phi(A_{n-1})}(\bv))= Q(\bu)+Q( \lglue k\rglue)\\
 &=Q(\bu)+\frac {k(n-k)}n>Q(\bu)+k-1,\\
\end{array}
$$
we have $\bu=0$.  This completes the proof. 
\end{proof} 

\begin{rmk} {\rm  Note that the above lemma does not hold if $\sqrt n \le k$. For example, one may easily check that $A_{16,4}$ is represented by $\langle 1 \rangle \perp A_{15}[4]$, whereas $A_{16,4}$ is not represented by $A_{15}[4]$. }   
\end{rmk}

 \begin{thm}  \label{even} Let $n$ be an integer greater than $15$  and let $k$ be an odd integer such that $1<k<\sqrt n$. Let $L$ be a $\z$-lattice with $\min(L) \ge 2$. If there is a representation $\phi : A_{n,k} \to L$, then $R_L(\phi(A_{n-1}))=A_{m-1}$ for some integer $m\ge n$.  
 \end{thm}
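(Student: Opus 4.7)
The plan is to rule out every isometry type of the indecomposable root component $R := R_L(\phi(A_{n-1}))$ other than $A_{m-1}$ with $m \ge n$. The hypothesis $\min(L) \ge 2$ excludes $I_1$, and since $\rank(R) \ge n-1 > 14$ the exceptional types $E_6, E_7, E_8$ are ruled out by rank. Hence $R \cong A_j$ or $R \cong D_j$ for some $j \ge n-1$. In the first case, setting $m = j+1 \ge n$ yields the conclusion, so the entire task reduces to excluding $R \cong D_j$.

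Set $\bv := \phi(\bx_{n,k})$ and decompose $\bv = \bv_R + \bv_\perp$ with $\bv_R \in \q R$ and $\bv_\perp \in \q R^\perp$. Using $\bx_{n,k} = \lglue k \rglue + \frac{1}{n}\bx_0$ from the setup, the vector $\bv_R - \phi(\lglue k \rglue)$ is orthogonal to $\phi(A_{n-1})$ inside $\q R$, so
$$
Q(\bv_R) \ge Q(\lglue k \rglue) = \frac{k(n-k)}{n} > k-1
$$
(the last inequality uses $k < \sqrt n$), while $Q(\bv_R) \le Q(\bv) = k$. Integrality of $L$ together with $R \subset L$ forces $\bv_R \in R^\# = D_j^\#$.

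Now assume $R \cong D_j$. The discriminant group $D_j^\#/D_j$ has four cosets, with minimum norms $0, j/4, 1, j/4$. Using $j \ge n - 1 > 14$ one verifies $j/4 > k$: for $k = 3$, $j \ge 15$ gives $j/4 \ge 15/4 > 3$; for odd $k \ge 5$, $k < \sqrt n$ gives $k^2 \le j$, so $j/4 \ge k^2/4 > k$. Hence $Q(\bv_R) \le k < j/4$ forces $\bv_R$ out of the two half-integer cosets $\lglue 1 \rglue + D_j$ and $\lglue 3 \rglue + D_j$, leaving $\bv_R \in D_j \cup (\lglue 2 \rglue + D_j) = \z^j$ and $Q(\bv_R) \in \z_{\ge 0}$. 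Combined with $k-1 < Q(\bv_R) \le k$, this pins down $Q(\bv_R) = k$, $\bv_\perp = 0$, and $\bv = \bv_R \in \z^j$.

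The contradiction now comes from the parity of $k$. If $\bv \in D_j$, then $Q(\bv)$ would be even since $D_j$ is an even lattice, contradicting that $k$ is odd. So $\bv \in \lglue 2 \rglue + D_j$, and one may write $\bv = \lglue 2 \rglue + \bw$ with $\bw \in D_j = R \subset L$; then $\lglue 2 \rglue = \bv - \bw \in L$ has norm $1$, contradicting $\min(L) \ge 2$. This excludes $R \cong D_j$ and completes the proof. The main technical hurdle is the numerical estimate $j/4 > k$ that places $\bv_R$ inside $\z^j$; once that is in hand, the even-lattice property of $D_j$ together with oddness of $k$ delivers the final contradiction.
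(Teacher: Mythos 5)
Your proof is correct. It shares the paper's skeleton: rule out the exceptional types by rank, note that the indecomposable component $R$ containing $\phi(A_{n-1})$ must be of type $A$ or $D$, and exclude the $D$-case by analyzing which coset of $R^{\#}/R$ contains the projection of $\phi(\bx_{n,k})$, using exactly the same two estimates (the projection onto $\q\phi(A_{n-1})$ has norm $\frac{k(n-k)}{n}>k-1$, and the half-integer cosets have minimum $\ge j/4>k$). Where you diverge is the endgame. The paper treats the coset $\lglue 0\rglue$ by invoking Lemma 4.1 to get $A_{n,k}\ra D_m$ and then contradicting the evenness of $D_m$, and treats the coset $\lglue 2\rglue$ by an explicit coordinate computation: writing $\phi(\bx_{n,k})=\sum a_i\be_i+t(\be_m+\bx_0)$, it shows the prescribed inner products with $\phi(A_{n-1})$ force the integral part to have norm at least $\min(k,n-k)=k$, so the total norm exceeds $k$. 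You instead exploit that once the half-integer cosets are excluded the $D_j$-component $\bv_R$ lies in $\z^j$, so $Q(\bv_R)$ is an integer pinned to $k$ by $k-1<Q(\bv_R)\le k$; this forces $\bv_\perp=0$, after which coset $\lglue 0\rglue$ dies by parity of $k$ and coset $\lglue 2\rglue$ dies because $\lglue 2\rglue=\bv-\bd$ with $\bd\in D_j\subset L$ would be a norm-one vector in $L$. Your version buys a cleaner, more self-contained argument (no appeal to Lemma 4.1 and no coordinate bookkeeping), and it also works verbatim for $j=n-1$; the paper's computation yields slightly more information (the $D_m$-part alone already has norm $\ge k$), but that extra strength is not needed for the theorem.
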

 
 \begin{proof} Since we are assuming that $n\ge 16$, and $R_L(\phi(A_{n-1}))$ is an indecomposable root sublattce of $L$ containing $\phi(A_{n-1})$, we have  either $R_L(\phi(A_{n-1}))=A_{m-1}$ or $R_L(\phi(A_{n-1}))=D_m$ for some integer $m$ with $m\ge n$.   Suppose that $R_L(\phi(A_{n-1}))=D_m$, where  $D_m=\z(\be_{1}-\be_{2})+\dots+\z(\be_{m-1}-\be_m)+\z(\be_{m-1}+\be_m)$.  We assume that $\phi: A_{n,k} \to D_m+\z(\phi(\bx_{n,k}))$. Without loss of generality, we may assume that
 $$
 \begin{array}{ll}
& \phi(A_{n-1})=\z(-\be_{m-n+1}+\be_{m-n+2})+\dots+\z(-\be_{m-1}+\be_m) \quad \text{and}\\
&\phi(\bx_{n,k})= \lglue i\rglue_{D_m}+\bd+\text{Proj}_{D_m^{\perp}}(\phi(\bx_{n,k})),\\
\end{array}
 $$
 where $0\le i\le 3$, $\bd \in D_m$, and $D_m^{\perp}$ is the orthogonal component of $D_m$ in $D_m+\z(\phi(\bx_{n,k}))$. If $i=0$, then $Q(\text{Proj}_{D_m^{\perp}}(\phi(\bx_{n,k})))$ is an integer. 
 Hence $A_{n,k}$ is represented by $D_m$ by Lemma \ref{noidentity}.  Since we are assuming that $k$ is odd, this is a contradiction. If $i=1$ or $3$, then  
 $$
k=Q(\phi(\bx_{n,k})) \ge Q(\lglue i\rglue)+Q(\text{Proj}_{D_m^{\perp}}(\phi(\bx_{n,k}))) \ge \frac m4 >k,
$$
 which is also a contradiction. Finally, assume that $i=2$. Then $D_m+\z(\phi(\bx_{n,k}))=D_m+\z(\be_m+\bx_0)$, where $\bx_0=\text{Proj}_{D_m^{\perp}}(\phi(\bx_{n,k}))$. Since we are assuming that $\min(L)\ge 2$, we have $\bx_0 \ne 0$.    Assume that 
 $$
 \phi(\bx_{n,k})=\sum_{i=1}^ma_i\be_i+t(\be_m+\bx_0),
 $$
 where $\sum_{i=1}^ma_i\be_i \in D_m$ and $t$ is a nonzero integer.  From the assumption,   we have 
 $$
 \sum_{i=1}^ma_i\be_i+t\be_m=\sum_{i=1}^{m-n}a_i\be_i-(a-1)(\be_{m-n+1}+\dots+\be_{m-k})-a(\be_{m-k+1}+\dots+\be_m),
 $$
 for some integers $a_i (1\le i\le m-n)$ and  $a$.  Therefore, we have
 $$
k= Q(\phi(\bx_{n,k}))=Q(\sum_{i=1}^ma_i\be_i+t\be_m)+Q(t\bx_0) \ge \min(k,n-k)+Q(t\bx_0)>k, 
 $$
 which is a contradiction. Therefore we have $R_L(\phi(A_{n-1}))=A_{m-1}$ for some integer $m$ greater than or equal to $n$.  
 \end{proof} 
 
 \begin{rmk} {\rm  Some conditions on $n$ and $k$  cannot be removed.  For example, if $k$ is even, then $A_{n,k}$ is always represented by $D_n$. One may easily check that $A_{8t+3,2t+1}$ is represented by $D_{8t+4}[1]$ for any positive integer $t$. In fact, if we assume that 
{\small $$
 D_{8t+4}[1]=\z(\be_1-\be_2)+\dots+\z(\be_{8t+3}-\be_{8t+4})+\z(\be_{8t+3}+\be_{8t+4})+\z\left(\frac{\be_1+\dots+\be_{8t+4}}2\right),
 $$}
then  $A_{8t+3,2t+1}$ is isometric to
$$
\z(\be_1-\be_2)+\dots+\z(\be_{8t+2}-\be_{8t+3})+\z\left(\frac{\be_1+\dots+\be_{6t+2}-\be_{6t+3}-\dots-\be_{8t+4}}2\right),
$$
which is a sublattice of $D_{8t+4}[1]$.
 }   
\end{rmk}
 
%%%%%%%%%%%%%%%%%%%%%%%%%%%%%%%%%%%%%%%%%%%%%%%%%%%%%%%%%%%%%%%%%%%%%%%%%%%%%
%%%%%%%%%%%%%%%%%%%%%%%%%%%%%%%%%%%%%%%%%%%%%%%%%%%%%%%%%%%%%%%%%%%%%%%%%%%%%
\begin{thm} \label{highermain} 
 Let $n$ be an integer and let $s,k$ be relatively prime odd integers such that $1<s<k<\sqrt{\frac n2}$. Let $L$ be a $\z$-lattice with $\min(L)\ge 2$ such that there are representations  
 $\phi_k : A_{n,k} \to L$ and $\phi_s : A_{n,s} \to L$. Then  we have $R_L(\phi_k(A_{n-1})) \perp R_L(\phi_k(A_{n-1}))$. 
\end{thm}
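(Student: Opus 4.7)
I argue by contradiction: assume that $R_L(\phi_k(A_{n-1}))=R_L(\phi_s(A_{n-1}))$, and call this common indecomposable component $R$. Applying Theorem~\ref{even} to both $\phi_k$ and $\phi_s$ (the hypotheses $s,k$ odd and $s,k<\sqrt{n/2}<\sqrt n$ are satisfied), $R\cong A_{m-1}$ for some $m\ge n$. Realize $R=\{(x_1,\ldots,x_m)\in\z^m : x_1+\cdots+x_m=0\}$ in its standard form. After applying a Weyl-group element of $R$, assume $\phi_k(A_{n-1})$ is the $A_{n-1}$-subsystem on indices $T_k=\{1,\ldots,n\}$; then $\phi_s(A_{n-1})$ is supported on some $n$-element subset $T_s\subseteq\{1,\ldots,m\}$.

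For each $\mu\in\{k,s\}$, the pairing relation $B(\phi_\mu(\bx_{n,\mu}),\phi_\mu(\bz))=B(\lglue\mu\rglue,\bz)$ for $\bz\in A_{n-1}$ forces an orthogonal decomposition in $\q L$:
\[
\phi_\mu(\bx_{n,\mu})=\phi_\mu(\lglue\mu\rglue_{A_{n-1}})+\bw_\mu+\bv_\mu,
\]
with $\phi_\mu(\lglue\mu\rglue_{A_{n-1}})$ the $\q\phi_\mu(A_{n-1})$-projection, $\bw_\mu$ in $\q R\cap\q\phi_\mu(A_{n-1})^\perp$, and $\bv_\mu\in(\q R)^\perp\cap\q L$. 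Taking norms with $Q(\lglue\mu\rglue_{A_{n-1}})=\mu(n-\mu)/n$ and $Q(\phi_\mu(\bx_{n,\mu}))=\mu$ yields the key identity
\[
Q(\bw_\mu)+Q(\bv_\mu)=\frac{\mu^2}{n}<\frac{1}{2},
\]
where the strict inequality uses $\mu<\sqrt{n/2}$.

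Focus on the subcase $T_k=T_s$, writing $N:=\phi_k(A_{n-1})=\phi_s(A_{n-1})$. Since $\gcd(k,s)=1$, the vectors $\bx_{n,k},\bx_{n,s}$ together with $A_{n-1}$ generate all of $I_n$ in the abstract setting, because $A_{n-1}+\z\bx_{n,k}+\z\bx_{n,s}=I_n$. Consider the sublattice $M:=N+\z\phi_k(\bx_{n,k})+\z\phi_s(\bx_{n,s})$ of $L$. If in fact $\bv_k=(k/s)\bv_s$ and the sign pattern of $\phi_s(\lglue s\rglue)$ on $T_k$ is aligned with that of $\phi_k(\lglue k\rglue)$, then a direct computation with the fractional-coordinate structure yields $B(\phi_k(\bx_{n,k}),\phi_s(\bx_{n,s}))=s$, so $M$ has Gram matrix matching that of $I_n$, giving $M\cong I_n$ and contradicting $I_n\not\to L$ (which follows from $\min(L)\ge 2$). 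Otherwise, choose Bezout integers $\alpha,\beta$ with $\alpha k+\beta s=1$; the vector $\by:=\alpha\phi_k(\bx_{n,k})+\beta\phi_s(\bx_{n,s})\in L$, after subtracting a suitable element of $N\subset L$ to reach the minimum $\q N$-norm class-$1$ representative $\lglue 1\rglue_{A_{n-1}}$ of norm $(n-1)/n$, has total norm at most $(n-1)/n+(|\alpha|k+|\beta|s)^2/n$ via Cauchy--Schwarz on the $\q N^\perp$-part; exploiting $\mu^2/n<1/2$ together with the refined structure in the non-parallel/non-aligned situation pushes this norm strictly below $2$, forcing $Q(\by)=1$ and contradicting $\min(L)\ge 2$.

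The subcase $T_k\ne T_s$ is handled by the same strategy: $\phi_k(A_{n-1})+\phi_s(A_{n-1})$ generates the $A_{|T_k\cup T_s|-1}$-subsystem of $R$, and a parallel Bezout construction produces either an $I_n$-embedding or a short vector in $L$. The main obstacle is that the naive Cauchy--Schwarz bound on the cross-term $B(\bv_k,\bv_s)$ is typically too loose for general odd coprime pairs, so to push $Q(\by)$ below $2$ one must carefully exploit the specific fractional sign pattern of $\phi_\mu(\lglue\mu\rglue)$ on $T_\mu$ (with $n-\mu$ entries equal to $\mu/n$ and $\mu$ entries equal to $-(n-\mu)/n$), together with the sharp smallness $Q(\bw_\mu)+Q(\bv_\mu)<1/2$ supplied by $\mu<\sqrt{n/2}$.
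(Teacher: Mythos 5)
There is a genuine gap, and you name it yourself at the end: the entire argument hinges on pushing $Q(\by)$ below $2$ for the Bezout combination $\by=\alpha\phi_k(\bx_{n,k})+\beta\phi_s(\bx_{n,s})$, but your only quantitative control is $Q(\bw_\mu)+Q(\bv_\mu)=\mu^2/n<\tfrac12$ together with Cauchy--Schwarz, which gives the bound $(n-1)/n+(|\alpha|k+|\beta|s)^2/n$; since Bezout coefficients are generically of size comparable to $s$ and $k$, this can exceed $2$ by a wide margin (e.g.\ $k=5$, $s=3$, $\alpha=2$, $\beta=-3$ already gives $(|\alpha|k+|\beta|s)^2/n=361/n$ with $n$ only slightly larger than $50$). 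The "refined structure" you invoke to close this is exactly the missing content. The paper's proof does not estimate the cross-term at all: it \emph{computes} it exactly, by a rigidity argument you do not have. Concretely, the paper first proves the glue-class identification \eqref{key}: the projection of $\phi_\omega(\bx_{n,\omega})$ onto the full component $A_{m-1}$ (not merely onto $\phi_\omega(A_{n-1})$, as in your decomposition) lies in the class $\lglue\omega\rglue+A_{m-1}$ or $\lglue m-\omega\rglue+A_{m-1}$; this requires a careful case analysis over all classes $j$ and is where the hypotheses on $\omega$ enter. Writing $\phi_\omega(\bx_{n,\omega})=\lglue\omega\rglue+\bu_\omega+\frac1m\bz_\omega$ with $\bz_\omega\in A_{m-1}^{\perp}$, the integrality of $B(\phi_k(\bx_{n,k}),\phi_s(\bx_{n,s}))$ then becomes a congruence $B(\bz_k,\bz_s)=ksm+Tm^2$, and combining this with the norm bounds $Q(\bz_\omega)\le m\omega^2$ and $\det\begin{pmatrix}Q(\bz_k)&B(\bz_k,\bz_s)\\B(\bz_k,\bz_s)&Q(\bz_s)\end{pmatrix}\ge 0$ forces $T=0$, $Q(\bz_\omega)=m\omega^2$, and exact proportionality of $\bz_k,\bz_s$. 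Only then does the Bezout vector, corrected by a suitable element of $A_{m-1}$, have norm exactly $\frac{m-1}{m}+\frac{(\tau k+\mu s)^2}{m}=1$, contradicting $\min(L)\ge 2$. Without the glue-class claim you have no congruence, and without the congruence-plus-positivity rigidity you cannot beat the loose Cauchy--Schwarz bound, so your case "otherwise" is unproven.

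Two secondary issues: your decomposition is taken orthogonally to $\phi_\mu(A_{n-1})$ and you reduce $\by$ only modulo $N=\phi(A_{n-1})$, whereas the argument needs to work modulo the whole component $A_{m-1}$ (that is where the glue classes live and where the correcting vector is chosen); and in your "aligned" subcase the conclusion $M\cong I_n$ additionally requires the two restrictions $\phi_k|_{A_{n-1}}$ and $\phi_s|_{A_{n-1}}$ to be compatible as maps, not merely to have the same image, which you do not verify. These could be repaired, but the unproven norm estimate is the decisive gap.
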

%%%%%%%%%%%%%%%%%%%%%%%%%%%%%%%%%%%%%%%%%%%%%%%%%%%%%%%%%%%%%%%%%%%%%%%%%%%%%
%%%%%%%%%%%%%%%%%%%%%%%%%%%%%%%%%%%%%%%%%%%%%%%%%%%%%%%%%%%%%%%%%%%%%%%%%%%%% 
 \begin{proof}  Recall that  $R_L(\phi_k(A_{n-1}))$ is the indecomposable component of the root sublattice $R_L$ of $L$ containing $\phi_k(A_{n-1})$.
  Suppose on the contrary that $R_L(\phi_k(A_{n-1}))=R_L(\phi_s(A_{n-1}))$. Then by Theorem \ref{even}, there is an integer $m \ge n$ such that  
 $$
 R_L(\phi_k(A_{n-1}))=R_L(\phi_s(A_{n-1}))=A_{m-1}=\z(-\be_1+\be_2)+\dots+\z(-\be_{m-1}+\be_m).
 $$
 Let $\bx_{n,k} \in A_{n,k}$ and $\bx_{n,s} \in A_{n,s}$ be vectors defined before.  Let $\omega=k$ or $\omega=s$. 
 We assume that $\phi_{\omega} : A_{n,\omega} \to A_{m-1}+\z(\phi_{\omega}(\bx_{n,\omega}))$.  By taking a suitable isometry of $A_{m-1}$, if necessary, we may assume that 
 $$
 \phi_{\omega}(A_{n-1})=\z(-\be_{m-n+1}+\be_{m-n+2})+\dots+\z(-\be_{m-1}+\be_{m}).  
 $$
 Then we have
 $$
 \phi_{\omega}(\lglue\omega\rglue)=\frac {\omega}n\left(\be_{m-n+1}+\dots+\be_{m-\omega}\right)+\frac{-(n-\omega)}n\left(\be_{m-\omega+1}+\dots+\be_m\right).
 $$

 First, we prove that 
 \begin{equation} \label{key}
 \text{Proj}_{A_{m-1}}(\phi_{\omega}(\bx_{n,\omega})) \in \lglue\omega\rglue_{A_{m-1}}+A_{m-1}   \quad \text{or} \quad  \lglue m-\omega\rglue_{A_{m-1}}+A_{m-1}.  
 \end{equation}

 Assume that  $\text{Proj}_{A_{m-1}}(\phi_{\omega}(\bx_{n,\omega})) \in \lglue j\rglue+A_{m-1}$ for some $j$ with $0\le j\le m$. Since there is an isometry of $A_{m-1}$ interchanging $j$ and $m-j$, we may assume that $0\le j\le\frac m2$.  Let $\bu= \text{Proj}_{A_{m-1}}(\phi_{\omega}(\bx_{n,\omega})) -\lglue j\rglue \in A_{m-1}$. Since $\phi_{\omega}(A_{n-1}) \subset A_{m-1}$, we have   
 $ \phi_{\omega}(\lglue\omega\rglue)-(\lglue j\rglue+\bu) \in \phi_{\omega}(A_{n-1})^{\perp}$.  First, assume that  $\omega<j\le \frac m2$. Then we have 
 $$
 \omega=Q(\phi_{\omega}(\bx_{n,\omega})) \ge Q(\lglue j\rglue+\bu) \ge Q(\lglue j\rglue)=\frac {j(m-j)}m \ge \omega+1-\frac{(\omega+1)^2}m>\omega,
 $$
 which is a contradiction. 
 
  Now, assume that $0\le j<\omega$.
 Recall that 
 $$
  \phi_{\omega}(\lglue\omega\rglue)=\left(\overbrace{0,\dots,0}^{m-n},\overbrace{ \frac \omega n,\dots\frac \omega n}^{n-\omega},\overbrace{\frac{-(n-\omega)}n,\dots,\frac{-(n-\omega)}n}^{\omega-j},\overbrace{\frac{-(n-\omega)}n,\dots,\frac{-(n-\omega)}n}^{j}\right) 
  $$
  and
  $$
 \lglue j\rglue= \left(\overbrace{\frac jm,\dots,\frac jm}^{m-n},\overbrace{\frac jm,\dots\frac  jm}^{n-\omega},\overbrace{\frac jm,\dots,\frac jm}^{\omega-j},\overbrace{\frac{-(m-j)}m,\dots,\frac{-(m-j)}m}^{j}\right).
 $$
Hence if we define  $\beta=-j/m$ and $\alpha=(\omega m-nj)/mn$, then we have
 $$
 \phi_{\omega}(\lglue\omega\rglue)-\lglue j\rglue=(\overbrace{\beta,\dots,\beta}^{m-n},\overbrace{\alpha,\dots,\alpha}^{n-\omega},\overbrace{-1+\alpha,\dots,-1+\alpha}^{\omega-j},\overbrace{\alpha,\dots,\alpha}^{j}).
 $$
 Therefore there are integers $s_1,\dots,s_{m-n}$ and $s$ such that 
$$
\bu=(\overbrace{s_1,\dots,s_{m-n}}^{m-n},\overbrace{s,\dots,s}^{n-\omega},\overbrace{-1+s,\dots,-1+s}^{\omega-j},\overbrace{s,\dots,s}^{j}),
 $$   
 where $\sum_{i=1}^{m-n} s_i+ns-(\omega-j)=0$.  Hence  
$$
Q(\lglue j\rglue+\bu)=\sum_{i=1}^{m-n}\left(\frac jm+s_i\right)^2+\left(\frac jm+s\right)^2(n-\omega)+\left(\frac jm+s-1\right)^2\omega.
$$
Note that $0\le \frac jm \le \frac{\omega}n \le \frac14$.  If $s \ne 0$, then 
$$
\omega \ge Q(\text{Proj}_{A_{m-1}}(\phi_{\omega}(\bx_{n,\omega})))=Q(\lglue j\rglue+\bu)\ge \left(\frac jm+s\right)^2(n-\omega) \ge \frac 9{16}\left(n-\omega\right) >\omega,
$$ 
which is a contradiction.  Therefore $s=0$ and $\sum_{i=1}^{m-n} s_i=\omega-j>0$.  Consequently, we have 
$$
\begin{array} {ll} 
Q(\lglue j\rglue+\bu)\!\!\!&=\displaystyle \sum_{i=1}^{m-n}\left(\frac jm+s_i\right)^2+\left(\frac jm\right)^2(n-\omega)+\left(\frac jm-1\right)^2\omega \\
&=\displaystyle\sum_{i=1}^{m-n} s_i^2+\frac {2j}m\left(\omega-j\right)+\left(\frac jm\right)^2\left(m-\omega\right)+\left(\frac jm-1\right)^2\omega\\
&=\displaystyle\sum_{i=1}^{m-n} s_i^2+\omega-\frac {j^2}m>\omega,
\end{array} 
$$
which is a contradiction.   

Now, let $\bu_k, \bu_s \in A_{m-1}$ and $\bz_k, \bz_s \in A_{m-1}^{\perp}$ be vectors such that   
$$
\phi_k(\bx_{n,k})=\lglue k\rglue+\bu_k+\frac 1m \bz_k \quad \text{and} \quad \phi_s(\bx_{n,s})=\lglue s\rglue+\bu_s+\frac 1m \bz_s.
$$
Since $B(\lglue k\rglue,\lglue s\rglue)=\frac {s(m-k)}m$, and $B(\phi_k(\bx_{n,k}),\phi_s(\bx_{n,s}))$ is an integer, there is an integer $T$ such that $B(\bz_k,\bz_s)=ksm+Tm^2$. 
For any $\omega \in \{k,s\}$, since 
$$
\omega=Q(\phi_{\omega}(\bx_{n,\omega}))=Q(\lglue \omega\rglue+\bu_k)+Q\left(\frac 1m \bz_{\omega}\right) \ge \frac {\omega(m-\omega)}m+Q\left(\frac 1m \bz_{\omega}\right),
$$
we have $Q(\bz_{\omega}) \le m\omega^2$. Furthermore, since 
$$
0 \le d(\z\bz_k+\z\bz_s)=Q(\bz_k)Q(\bz_s)-B(\bz_k,\bz_s)^2 \le (ksm)^2-(ksm+Tm^2)^2,
$$
and $1<s<k<\sqrt{\frac n2}$ from the assumption, we have $T=0$ and $Q(\bz_{\omega})=m\omega^2$ for any $\omega \in \{k,s\}$.  
Since $k$ and $s$ are relatively prime, there are integers $\tau$ and  $\mu$ such that $\tau k+\mu s=1$.  Choose a vector 
$$
\bu=\lglue 1\rglue-(\tau(\lglue k\rglue+\bu_k)+\mu(\lglue s\rglue+\bu_s)) \in A_{m-1}.
$$
Then we have 
$$
\begin{array} {lll}
Q(\tau \phi_k(\bx_{n,k})+\mu\phi_s(\bx_{n,s})+\bu)\!\!\!&=Q\left(\lglue 1\rglue+\displaystyle\frac 1m\left(\tau \bz_k+\mu \bz_s\right)\right)\\
&=\displaystyle \frac {(m-1)}m+\frac {(\tau^2 mk^2+2\tau\mu ksm+\mu^2ms^2)}{m^2}=1.\\ 
\end{array} 
$$
Therefore $L$ contains a unit vector,  which  is a contradiction. 
  \end{proof} 
\begin{rmk} {\rm Let $L$ be a $\z$-lattice defined by
$$
L=A_{27}+\z\left(\lglue 3\rglue+\frac 1{28}\bx\right)+\z\left(\lglue 5\rglue+\frac1{28}\by\right),
$$
where 
$$
\z \bx+\z \by=\begin{pmatrix} 28\cdot9&-28\cdot13\\-28\cdot13&28\cdot25\end{pmatrix}.
$$ 
Then $L$ is an integral $\z$-lattice of rank $29$  such that $\min(L)=2$ and  $dL=2$. 
Note that both $A_{n,3}$ and $A_{n,5}$ are represented by $L$ for any integer $n$ with  $6 \le n\le 28$.  Hence Theorem \ref{highermain} does not hold for $(s,k,n)=(3,5,n)$ for any integer $n$ with $6\le n\le 28$.} 
\end{rmk}

\begin{thm}  \label{mainlowerb}  Let $n$ be any positive integer greater than $1$  and let $t$ be the number of primes less than $\sqrt{\frac n2}$. Then any isolation of $I_n$ represents 
$$
I_{n-1}\perp D_n\perp \overbrace{A_{n-1}\perp \dots\perp A_{n-1}}^{\text{$(t-1)$-orthogonal sums}}.
$$ 
In particular, we have $2n-1+(n-1)(t-1) \le \text{Iso}(I_n)$ and hence $\text{Iso}(I_n) \in \Omega(n^{\frac32-\epsilon})$ for any $\epsilon>0$.   
\end{thm}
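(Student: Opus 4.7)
Let $L$ be an isolation of $I_n$. Since $I_{n-1}$ is a proper sublattice of $I_n$ and has class number one, $L\simeq I_{n-1}\perp L'$ with $\min(L')\ge 2$ (the latter because $L$ does not represent $I_n$). The strategy is to locate, inside $L'$, one $D_n$-component together with one $A_{m_p-1}$-component for each odd prime $p<\sqrt{n/2}$, with all of these components pairwise orthogonal inside the root sublattice $R_{L'}$.

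For any prime $p<\sqrt{n/2}$, the sublattice $A_{n,p}\subset I_n$ is proper and so is represented in $L$; since $p<\sqrt n$, Lemma \ref{noidentity} forces the image of $A_{n,p}$ to lie inside $L'$. The case $p=2$ gives $D_n=A_{n,2}\hookrightarrow L'$ whenever $n\ge 5$. For each of the $t-1$ odd primes $p<\sqrt{n/2}$, Theorem \ref{even} identifies the indecomposable component $R_{L'}(\phi_p(A_{n-1}))$ as $A_{m_p-1}$ with $m_p\ge n$, and Theorem \ref{highermain}, applied to any two distinct such odd primes, shows that the corresponding $A$-type components of $R_{L'}$ are pairwise orthogonal. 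Finally, since $D_n$ is an indecomposable root lattice of type $D$ for $n\ge 4$, its containing indecomposable root component in $L'$ must be of type $D_{m_0}$ ($m_0\ge n$) or exceptional $E_m$ ($6\le m\le 8$); in either case this component has rank $\ge n$ and is not of type $A$, hence must be orthogonal to every $A_{m_p-1}$ located above.

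Putting these pieces together, $L$ contains the orthogonal sum $I_{n-1}\perp D_n\perp A_{n-1}\perp\cdots\perp A_{n-1}$ with $t-1$ copies of $A_{n-1}$, so
\[
\text{Iso}(I_n)\ge (n-1)+n+(n-1)(t-1)=2n-1+(n-1)(t-1).
\]
By the prime number theorem, $t=\pi(\sqrt{n/2})\sim \sqrt n/\log n$, so the term $(n-1)(t-1)$ dominates and $\text{Iso}(I_n)\in\Omega(n^{3/2}/\log n)\subseteq \Omega(n^{3/2-\epsilon})$ for every $\epsilon>0$. The small cases outside the range of Theorem \ref{even} (namely $n\le 15$, and indeed any $n\le 18$ where $t\le 1$) are handled either by the $D_n$-argument alone (giving $\text{Iso}(I_n)\ge 2n-1$ whenever $n\ge 5$) or by the trivial bound $\text{Iso}(I_n)\ge n$ (for $2\le n\le 4$), so the inequality still holds in those ranges.

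\textbf{Main obstacle.} The delicate step is the pairwise orthogonality of the $A$-type components arising from different odd primes, which is exactly the content of Theorem \ref{highermain}; it is there that the sharper restriction $p<\sqrt{n/2}$ (rather than the weaker $p<\sqrt n$ of Theorem \ref{even}) is needed, in order that the Gram-matrix inequality $d(\z\bz_k+\z\bz_s)\ge 0$ force the cross term $B(\bz_k,\bz_s)$ to vanish and so preclude a shared root component. Granted that orthogonality, everything else---the identification of the $A$-components via Theorem \ref{even}, the separation of the $D_n$-component by elementary root-lattice considerations, and the asymptotic via the prime number theorem---is routine.
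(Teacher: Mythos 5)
Your proposal is correct and takes essentially the same route as the paper: write $L\simeq I_{n-1}\perp L'$ with $\min(L')\ge 2$, push each $A_{n,p}$ with $p<\sqrt{n/2}$ into $L'$ via Lemma \ref{noidentity}, identify the components for the odd primes as $A_{m_p-1}$ by Theorem \ref{even}, and obtain their pairwise orthogonality from Theorem \ref{highermain}, then count ranks and apply the prime number theorem. Your explicit separation of the $D_n$-image from the $A$-type components and your treatment of the small cases $n\le 18$ only make explicit what the paper leaves implicit (note also that the splitting $L\simeq I_{n-1}\perp L'$ follows from $I_{n-1}$ being unimodular, not from its class number), so the two arguments coincide in substance.
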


\begin{proof}  Let $p_1=2<p_2<\dots<p_t$ be all primes less than $\sqrt{\frac n2}$. Let $L$ be any isolation of $I_n$. Then clearly, $L\simeq I_{n-1}\perp L_1$ for some $\z$-sublattice $L_1$ of $L$ with $\min(L_1) \ge 2$. 
Since the $\z$-sublattice $A_{n,p_i}=A_{n-1}p_i^2n[p_i\frac1n]$ of $I_n$ is represented by $L$, whereas it is not represented by $I_{n-1}$,  we have $A_{n,p_i} \ra L_1$  for any $i=1,2,\dots,t$.  Note that $A_{n,2}=D_n$. 
 Now, by Theorems \ref{even} and  
 \ref{highermain}, we have 
$$
I_{n-1}\perp D_n\perp \overbrace{A_{n-1}\perp \dots\perp A_{n-1}}^{\text{$(t-1)$-orthogonal sums}} \ra L
$$ 
The theorem  follows directly from this.  \end{proof}

%%%%%%%%%%%%%%%%%%%%%%%%%%%%%%%%%%%%%%%%%%%%%%%%%%%%%%%%%%%%%%%%%%%%%%%%%%%%%%%%%%%
%%%%%%%%%%%%%%%%%%%%%%%%%%%%%%%%%%%%%%%%%%%%%%%%%%%%%%%%%%%%%%%%%%%%%%%%%%%%%%%%%%%
%%%%%%%%%%%%%%%%%%%%%%%%%%%%%%%%%%%%%%%%%%%%%%%%%%%%%%%%%%%%%%%%%%%%%%%%%%%%%%%%%%%
%%%%%%%%%%%%%%%%%%%%%%%%%%%%%%%%%%%%%%%%%%%%%%%%%%%%%%%%%%%%%%%%%%%%%%%%%%%%%%%%%%%
%%%%%%%%%%%%%%%%%%%%%%%%%%%%%%%%%%%%%%%%%%%%%%%%%%%%%%%%%%%%%%%%%%%%%%%%%%%%%%%%%%%
%%%%%%%%%%%%%%%%%%%%%%%%%%%%%%%%%%%%%%%%%%%%%%%%%%%%%%%%%%%%%%%%%%%%%%%%%%%%%%%%%%%
%%%%%%%%%%%%%%%%%%%%%%%%%%%%%%%%%%%%%%%%%%%%%%%%%%%%%%%%%%%%%%%%%%%%%%%%%%%%%%%%%%%
%%%%%%%%%%%%%%%%%%%%%%%%%%%%%%%%%%%%%%%%%%%%%%%%%%%%%%%%%%%%%%%%%%%%%%%%%%%%%%%%%%%
%%%%%%%%%%%%%%%%%%%%%%%%%%%%%%%%%%%%%%%%%%%%%%%%%%%%%%%%%%%%%%%%%%%%%%%%%%%%%%%%%%%
%%%%%%%%%%%%%%%%%%%%%%%%%%%%%%%%%%%%%%%%%%%%%%%%%%%%%%%%%%%%%%%%%%%%%%%%%%%%%%%%%%%
\section{An explicit upper bound for $\text{Iso}(I_n)$}
%%%%%%%%%%%%%%%%%%%%%%%%%%%%%%%%%%%%%%%%%%%%%%%%%%%%%%%%%%%%%%%%%%%%%%%%%%%%%%%%%%%
%%%%%%%%%%%%%%%%%%%%%%%%%%%%%%%%%%%%%%%%%%%%%%%%%%%%%%%%%%%%%%%%%%%%%%%%%%%%%%%%%%%
%%%%%%%%%%%%%%%%%%%%%%%%%%%%%%%%%%%%%%%%%%%%%%%%%%%%%%%%%%%%%%%%%%%%%%%%%%%%%%%%%%%
%%%%%%%%%%%%%%%%%%%%%%%%%%%%%%%%%%%%%%%%%%%%%%%%%%%%%%%%%%%%%%%%%%%%%%%%%%%%%%%%%%%
%%%%%%%%%%%%%%%%%%%%%%%%%%%%%%%%%%%%%%%%%%%%%%%%%%%%%%%%%%%%%%%%%%%%%%%%%%%%%%%%%%%
%%%%%%%%%%%%%%%%%%%%%%%%%%%%%%%%%%%%%%%%%%%%%%%%%%%%%%%%%%%%%%%%%%%%%%%%%%%%%%%%%%%
%%%%%%%%%%%%%%%%%%%%%%%%%%%%%%%%%%%%%%%%%%%%%%%%%%%%%%%%%%%%%%%%%%%%%%%%%%%%%%%%%%%
%%%%%%%%%%%%%%%%%%%%%%%%%%%%%%%%%%%%%%%%%%%%%%%%%%%%%%%%%%%%%%%%%%%%%%%%%%%%%%%%%%%

In this section, we give an explicit upper bound for $\text{Iso}(I_n)$.  Let $L=\z \bx_1+\z \bx_2+\dots+\z \bx_n$ be a $\z$-lattice of rank $n$, and let 
$$
f_L(x_1,x_2,\dots,x_n)=\sum_{i,j=1}^n B(\bx_i,\bx_j)x_ix_j
$$
 be the corresponding quadratic form.  Note that the corresponding quadratic form $f_L$ depends on the choice of the basis for $L$.  Let $h_j$'s and $c_{i,j}$'s be rational numbers such that  
$$
\begin{array} {rl}
f_L(x_1,x_2,\dots,x_n)&=\displaystyle\sum_{i,j=1}^n B(\bx_i,\bx_j)x_ix_j\\
&=h_1(x_1+c_{12}x_2+\dots+c_{1n}x_n)^2\\
&\hskip 1pc +h_2(x_2+c_{23}x_3+\dots+c_{2n}x_n)^2\\
&\hskip 2pc+\dots+h_nx_n^2. \\
\end{array}
$$
We say  the ``ordered''  basis $\{\bx_i\}_{i=1}^n$ for $L$ (or the corresponding quadratic form $f_L$) is {\it Hermite reduced} 
\begin{itemize} 
\item [(i)] if  $\vert c_{ij}\vert \le \frac12$ for any $i,j$ with $1\le i<j\le n$ and 
\item [(ii)]  if, as a quadratic form, 
$$
f^i_L(x_i,x_{i+1},\dots,x_n)=h_i(x_i+c_{i,i+1}x_{i+1}+\dots+c_{in}x_n)^2+\dots+h_nx_n^2
$$
satisfies
$$
\min_{(x_i,\dots,x_n) \in \z^{n-i+1}-\{0\}} f^i_L(x_i,\dots,x_n)=h_i \quad  \text{for any $i=1,2,\dots,n$.}
$$
\end{itemize}
    For each $i=1,2,\dots,n$, the above constant $h_i$ is called  the {\it $i$-th Hermite minimum} of $L$ with respect to the basis $\{\bx_i\}$ (or the {\it $i$-th Hermite minimum} of the corresponding Hermite reduced form $f_L$).  
 Note that  a Hermite reduced basis is different from a Minkowski reduced basis, which is usually adapted (for details, see \cite{ca}). One may easily  check that any $\z$-lattice has a Hermite reduced basis, which is not necessarily unique  in general.  If $a_i$ is an integer satisfying $\vert a_i+c_{i,i+1}\vert \le \frac12$, then from the definition, it satisfies
 $$
 h_i=\min f^i_L(x_i,\dots,x_n) \le f^i_L(a_i,1,0,\dots,0) =h_i(a_i+c_{i,i+1})^2+h_{i+1} \le \frac {h_i}4+h_{i+1}.
 $$
 Therefore we have $\frac{h_i}{h_{i+1}} \le \frac43$ for any $i$ with  $1\le i \le n-1$.   

For a positive integer $n$, let $\mathfrak G(n)$ be the set of all $\z$-lattices of rank $n$ that are represented by a sum of squares $I_m$ for some positive integer $m$.  We define 
$$
g(n)=\min \{N : \ell \ra I_N  \ \ \text{for any $\ell \in \mathfrak G(n)$}\}. 
$$
Larange proved that any positive integer is a sum of four squares and hence $g(1)=4$. In \cite{mo}, Mordell generalized Lagrange's four square theorem by proving that any positive definite integral binary quadratic form is represented by a sum of five squares and hence $g(2)=5$. Ko proved in \cite{ko} that $g(n)=n+3$ for any integer $n=3,4,5$.  In \cite{o1} and \cite{o2}, it was proved that $g(6)=10$.  For some basic properties and upper bounds for $g(n)$ for some large $n$, see  \cite{o3} and \cite{o4}.  As far as the author knows, there is no known exact values of $g(n)$ for any $n\ge 7$. Recently, it was proved in \cite{be} that for any $\epsilon>0$, 
$$
g(n) \in O(e^{(4+2\sqrt2+\epsilon)n}).
$$ 

\begin{thm} \label{upper} For any positive integer $n$, we have 
$$
\text{Iso}(I_n) \le  g(n)+\left(\frac43\right)^n(3n^3-12n^2+48n)+\frac12 n^3-\frac32n^2-47n-1.
$$
\end{thm}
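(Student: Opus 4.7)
The plan is to construct an explicit isolation $L$ of $I_n$ of the form
\[
L \;=\; I_{n-1}\;\perp\; L_{\mathrm{univ}}\;\perp\; L_{\mathrm{small}},
\]
where $L_{\mathrm{univ}}\perp L_{\mathrm{small}}$ has minimum at least $2$. Such an $L$ has exactly $n-1$ pairwise orthogonal unit vectors and hence does not represent $I_n$. On the other hand, any proper sublattice of $I_n$ decomposes, by peeling off all unit vectors, as $I_r\perp\ell'$ with $0\le r\le n-1$ and $\min(\ell')\ge 2$; since $I_r\ra I_{n-1}$, verifying that $L$ is an isolation reduces to showing that $L_{\mathrm{univ}}\perp L_{\mathrm{small}}$ represents every sublattice $\ell'\subseteq I_m$ with $m\le n$ and $\min(\ell')\ge 2$.

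For $L_{\mathrm{univ}}$, I would aim for rank $g(n)$. Every such $\ell'$ of rank $k'\le n$ lies in $\mathfrak{G}(k')$ and hence is represented by $I_{g(k')}\subseteq I_{g(n)}$; the obstruction is that $I_{g(n)}$ has unit vectors and cannot be used directly as a summand of an isolation of $I_n$. A Conway--Sloane style gluing of $I_{g(n)}$ (in the spirit of the root-lattice constructions used in Sections 2 and 3) should produce a rank-$g(n)$ lattice with minimum at least $2$ that still represents every $\ell'$ whose first Hermite minimum $h_1(\ell')$ exceeds an explicit threshold $B(n)$; this preservation can be verified locally using Lemma \ref{local} and the local-to-global strategy of Theorems 3.1 and 3.2.

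For the remaining $\ell'$---those with $h_1(\ell')\le B(n)$---the Hermite reduction bounds $h_i\le(4/3)^{i-1}h_1$ and $|c_{ij}|\le 1/2$ imply that the Gram matrix of a Hermite-reduced model of $\ell'$ has entries explicitly bounded by $(4/3)^{n-1}B(n)$. A direct enumeration then yields a finite list of isometry classes, and $L_{\mathrm{small}}$ is taken to be the orthogonal sum of one representative from each class. Counting these isometry classes by sweeping through all possible tuples of Hermite minima and off-diagonal entries should yield the $(4/3)^n(3n^3-12n^2+48n)$ term of the bound, while the additional polynomial correction $\tfrac12 n^3-\tfrac32 n^2-47n-1$ accounts for parity-, discriminant- and low-rank boundary cases---essentially the same phenomena already seen to be necessary in Theorems \ref{even} and \ref{highermain}.

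The main obstacle is the construction and verification of $L_{\mathrm{univ}}$: achieving rank exactly $g(n)$ (rather than $g(n)+O(1)$) forces us to exploit the full strength of the $g$-function while simultaneously arranging a gluing that removes every unit vector without destroying the representation of large-$h_1$ sublattices. This will require a careful local analysis parallel to the quaternary unimodular arguments of Lemma \ref{local} and the detailed case work of Theorems 3.1 and 3.2, together with an exact matching of the threshold $B(n)$ to the polynomial count of small $\ell'$ so that the final tally lands on the stated bound.
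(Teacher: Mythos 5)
Your reduction of the problem to building a lattice with $\min\ge 2$ that represents every sublattice $\ell'$ of some $I_m$ with $\min(\ell')\ge 2$ is the right first step, and it matches the paper. After that, however, the plan has a decisive gap: your split of these $\ell'$ according to the \emph{first} Hermite minimum $h_1$ cannot work, because the inequality you invoke goes the wrong way. Hermite reduction gives $h_i\le \frac43\,h_{i+1}$, i.e.\ $h_1\le (\frac43)^{n-1}h_n$; it does \emph{not} give $h_i\le(\frac43)^{i-1}h_1$. Consequently, bounding $h_1(\ell')\le B(n)$ bounds nothing else: already $\langle 2,2N\rangle\subset I_m$ has $h_1=2$ and $h_2=2N$ arbitrary, so the family you want to enumerate for $L_{\mathrm{small}}$ contains infinitely many isometry classes and cannot be realized as a finite orthogonal sum. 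The paper's proof is arranged precisely to avoid this: it repeatedly subtracts $2x_n^2$ from a Hermite reduced form of $\ell'$ (legitimate as long as the \emph{last} minimum $h_n\ge 4$, and positive definiteness and $\min\ge2$ are preserved), so that what remains is a Hermite reduced form $g$ with $u_n<4$, whence \emph{all} minima satisfy $u_i<4(\frac43)^{n-i}$ and the Gram entries are explicitly bounded; only then is the finite set $\mathfrak S(n)$ counted, and that count is what produces the $(\frac43)^n(3n^3-12n^2+48n)+\frac12n^3-\frac32n^2-47n-1$ term (after multiplying by the rank $n$ of each $L_g$ and adding $n-1$).

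The second gap is the lattice you call $L_{\mathrm{univ}}$. You posit a rank-$g(n)$ lattice with $\min\ge 2$, obtained by some gluing of $I_{g(n)}$, that represents every $\ell'$ with large $h_1$, to be verified by the local-to-global arguments of Section~3. Those arguments are not available here: in Section~3 the represented lattices have rank $2$ or $3$, the target has class number one, and the codimension is controlled, none of which holds when you try to represent arbitrary rank-$n$ lattices of large minimum by a single fixed lattice of rank $g(n)$. The paper needs no such lattice and no local analysis in Section~5: after the peeling step the ``large'' part of $f_{\ell'}$ is literally $2\sum_{i=1}^N(\text{integral linear form})^2$, i.e.\ twice a member of $\mathfrak G(n)$, so by the very definition of $g(n)$ it is represented by $2I_{g(n)}$ --- a lattice that automatically has minimum $2$, with no gluing required. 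This is why the final isolation $I_{n-1}\perp 2I_{g(n)}\perp_{g\in\mathfrak S(n)}L_g$ achieves the stated bound; to repair your write-up you would essentially have to replace both the $h_1$-threshold dichotomy and the glued $L_{\mathrm{univ}}$ by this scaled-square decomposition.
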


\begin{proof} Let  $L$ be any proper sublattice of $I_n$. Then there is a nonnegative integer $k$ less than $n$ and a sublattice $\ell$ of $L$ with $\min(\ell)\ge 2$ such that $L=I_k \perp \ell$.  If a $\z$-lattice $\mathcal L$ represents all $\z$-lattices of rank less than $n$ whose minimum is greater than $1$, then $I_{n-1} \perp \mathcal L$ is an isolation of $I_n$, and hence $\text{Iso}(I_n) \le n-1+\rank(\mathcal L)$.   

Let $\ell=\z \bx_1+\z \bx_2+\dots+\z \bx_n$ be a $\z$-lattice of rank $n$ such that $\min(\ell) \ge 2$.  We assume that  $\{\bx_i\}_{i=1}^n$ is a Hermite reduced basis for $\ell$.  Assume that the $n$-th Hermite minimum $h_n$ of $\ell$ with respect to this Hermite reduced basis is  greater than or equal to $4$. Define a $\z$-lattice $\widetilde{\ell}=\z \widetilde{\bx_1}+\z \widetilde{\bx_2}+\dots+\z \widetilde{\bx_n}$  such that
$$
B(\widetilde{\bx_i},\widetilde{\bx_j})=\begin{cases} B(\bx_i,\bx_j) \quad &\text{if $(i,j) \ne (n,n)$},\\
                                                                          B(\bx_n,\bx_n)-2 \quad &\text{otherwise.}\end{cases}
$$                                                                          
Then we have
$$
\begin{array} {rl}
f_{\widetilde{\ell}}(x_1,x_2,\dots,x_n) &=f_{\ell}(x_1,x_2,\dots,x_n)-2x^2_n \\
&=h_1(x_1+c_{12}x_2+\dots+c_{1n}x_n)^2\\
&\hskip 1pc +h_2(x_2+c_{23}x_3+\dots+c_{2n}x_n)^2\\
&\hskip 2pc+\dots+(h_n-2)x_n^2. \\
\end{array}
$$
Since we are assuming that $h_n \ge 4$, the $\z$-lattice $\widetilde{\ell}$ is positive definite. 
Now, assume that $(x_1,x_2,\dots,x_n) \in \z^n-\{(0,0,\dots,0)\}$. 
If $x_n=0$, then 
$$
f_{\widetilde{\ell}}(x_1,x_2,\dots,0)=f_{\ell}(x_1,x_2,\dots,0) \ge 2,
$$
and if $x_n \ne0$, then $f_{\widetilde{\ell}}(x_1,x_2,\dots,x_n) \ge  (h_n-2)x_n^2 \ge 2$.  Hence $\min(\widetilde{\ell}) \ge 2$.  Furthermore, by taking a suitable basis for $\ell$, we may assume that there are integers $a_i$'s such that 
$$
f'_{\ell}(x_1,x_2,\dots,x_n)=f'_{\widetilde{\ell}}(x_1,x_2,\dots,x_n)+2(a_1x_1+a_2x_2+\dots+a_nx_n)^2,
$$  
where the corresponding quadratic form $f'_{\widetilde{\ell}}$ is a Hermite reduced form.  Since 
$d(\widetilde{\ell}) <d(\ell)$, by repeating the above process, if necessary,  at most finitely many, we may conclude that there is a positive interger $N$ and integers $a_{ij}$'s such that 
$$
\widetilde{f}_{\ell}(x_1,x_2,\dots,x_n)=2\sum_{i=1}^N (a_{1i}x_1+a_{2i}x_2+\dots+a_{ni}x_n)^2+g(x_1,x_2,\dots,x_n),
$$  
 where $\widetilde{f}_{\ell}$ is a quadratic form corresponding to $\ell$ for a suitable basis, and  $g$ is a Hermite reduced quadratic form  with $\min(g)\ge 2$ such that the $n$-th Hermite minimum $u_n$ of $g$ is less than $4$. From this and the definition of $g(n)$, the quadratic form $\widetilde{f}_{\ell}$ is represented by $2I_{g(n)} \perp g$.  

Now, assume that
$$
\begin{array} {rl}
g(x_1,x_2,\dots,x_n)=&\sum_{i,j=1}^n g_{ij}x_ix_j\\
&=u_1(x_1+d_{12}x_2+\dots+d_{1n}x_n)^2\\
&\hskip 1pc +u_2(x_2+d_{23}x_3+\dots+d_{2n}x_n)^2\\
&\hskip 2pc+\dots+u_nx_n^2, \\
\end{array}
$$
where $u_i$ is the $i$-th Hermite minimum  of $g$ for any integer $i$ with $1\le i\le n$.  Note that
$$
\begin{cases} &2\le g_{ii}=u_1d_{1i}^2+u_2d_{2i}^2+\dots+u_{i-1}d_{i-1,i}^2+u_i \le  \frac14\left(u_1+\dots+u_{i-1}\right)+u_i \\
 &\vert g_{i,j}\vert=\vert u_1d_{1i}d_{2i}+\dots+u_{i-1}d_{i-1,i}d_{i-1,j}+u_id_{ij}\vert \le \frac14\left(u_1+\dots+u_{i-1}\right)+\frac12 u_i, \\
 \end{cases}
 $$
and $0\le u_i <4\cdot \left(\frac43\right)^{n-i}$. Hence if $\mathfrak S(n)$ is the set of all Hermite reduced quadratic forms satisfying the above inequality, then we have
$$
\begin{array} {rl}
\vert \mathfrak S(n)\vert& \le \displaystyle \sum_{i=1}^{n-1}\left[\frac14S_{i-1}+u_i-1+(n-i)\left(2\left(\frac14S_{i-1}+\frac12u_i\right)+1\right)\right]+\frac14S_{n-1}+u_n\\
&=\left(\frac43\right)^n\left(3n^2-12n+48\right)+\frac12n^2-\frac32n-48,
\end{array}
$$
where $S_j=u_1+u_2+\dots+u_j$ for any integer  $j$ with $1\le j\le n$. Since the $\z$-lattice
$$
I_{n-1}\perp 2I_{g(n)} \displaystyle \perp_{g \in \mathfrak S(n)} L_g
$$
represents all proper $\z$-lattices of $I_n$, but not $I_n$ itself, it is an isolation of $I_n$. Now, the theorem follows directly from this.  
\end{proof}

\end{document}